% Template for ICASSP-2020 paper; to be used with:
%          spconf.sty  - ICASSP/ICIP LaTeX style file, and
%          IEEEbib.bst - IEEE bibliography style file.
% --------------------------------------------------------------------------
\documentclass[11pt]{article}
\usepackage{fullpage}
\usepackage{microtype}
\usepackage{xfrac}
\usepackage{color}
\usepackage{amsmath,amsthm,amssymb,amsfonts}
\usepackage[numbers]{natbib}

\usepackage[]{color-edits}

\usepackage{amsmath,graphicx}

\usepackage{array}
\usepackage[outline]{contour}
\usepackage{amsthm}
\usepackage{subfig,placeins} 
\usepackage{graphicx,color,float,epstopdf}
\thispagestyle{empty} \setlength{\parindent}{0pt}
\usepackage{fix-cm}
\usepackage{amssymb}
\usepackage{arydshln}
\usepackage{tabularx}
\usepackage{mathtools}
\usepackage{graphicx}
\usepackage{psfrag}
\usepackage{epsfig}
\usepackage{graphics,bm,paralist,ifthen,color, algorithm}
\usepackage{array}
\usepackage{algpseudocode}
\usepackage{calc}
\usepackage{longtable}
\usepackage{mathrsfs}
\usepackage{float}
\usepackage{lipsum}
\usepackage{thmtools,thm-restate}
\usepackage{lmodern}
%\captionsetup[table]{skip=10pt}
%\usepackage{spconf,amsmath,graphicx}
\usepackage[utf8]{inputenc}
\usepackage{thmtools}
\usepackage{thm-restate}
\usepackage{hyperref}
\usepackage{amsthm}

\usepackage{cleveref}
%\makeatletter
\def\BState{\State\hskip-\ALG@thistlm}
\makeatother

\algnewcommand{\IfThenElse}[3]{% \IfThenElse{<if>}{<then>}{<else>}
	\State \algorithmicif\ #1\ \algorithmicthen\ #2\ \algorithmicelse\#3}

\newcommand*\circled[1]{\tikz[baseline=(char.base)]{
            \node[shape=circle,draw,inner sep=2pt] (char) {#1};}}

\DeclareMathOperator*{\argmin}{arg\,min}

\newcommand{\bd}{{\mathbf{d}}}
\newcommand{\bx}{{\mathbf{x}}}
\newcommand{\by}{{\mathbf{y}}}

\newcommand{\bt}{\btheta_{t}, \balpha_{t+1}}

\newcommand{\bz}{{\mathbf{z}}}
\newcommand{\cZ}{{\mathcal{Z}}}

\newcommand{\cSt}{\Theta}
\newcommand{\cSa}{\mathcal{A}}
\newcommand{\cX}{{\cal X}}
\newcommand{\cY}{{\cal Y}}

\newcommand{\gl}{g}
\newcommand{\ql}{q}

\newcommand{\hl}{\nabla_{\btheta} h(\btheta_t, \balpha_{t+1})}
\newcommand{\hls}{\nabla_{\btheta} h(\btheta_t, \balpha^*(\btheta_t))}
\newcommand{\hlss}{\nabla_{\balpha} h(\btheta_t, \balpha^*(\btheta_t))}
\newcommand{\hla}{\nabla_{\balpha} h(\btheta_t, \balpha_{t+1})}

\newcommand{\btheta}{\boldsymbol{\theta}}
\newcommand{\balpha}{\boldsymbol{\alpha}}

\usepackage{tikz}
\usetikzlibrary{decorations.pathreplacing,calc}
\newcommand{\tikzmark}[1]{\tikz[overlay,remember picture] \node (#1) {};}

\newcommand*{\AddNote}[4]{%
    \begin{tikzpicture}[overlay, remember picture]
        \draw [decoration={brace,amplitude=0.7em},decorate,thin,black]
            ($(#3)!(#1.north)!($(#3)-(0,1)$)$) --  
            ($(#3)!(#2.south)!($(#3)-(0,1)$)$)
                node [align=center, text width=2.5cm, pos=0.5, anchor=west] {#4};
    \end{tikzpicture}
}%

\usepackage{algpseudocode}% http://ctan.org/pkg/algorithmicx

\newtheorem{asu}{Assumption}

\newtheorem{defi}{Definition}
\newtheorem{rmk}{Remark}
\newtheorem{thm}{Theorem}

\providecommand{\keywords}[1]
{
  \small	
  \textbf{\textit{Keywords---}} #1
}

% Example definitions.
% --------------------

%\def\L{{\cal L}}

% Title.
% ------
\addauthor{mr}{blue}

\title{Solving Non-Convex Non-Differentiable Min-Max Games using Proximal Gradient Method\thanks{This arXiv submission includes the details of the proofs for the paper accepted for publication in the proceeding of the $45^{th}$ International Conference on Acoustics, Speech, and Signal Processing (ICASSP).}}

\newcommand*{\email}[1]{\texttt{#1}}
\author{
Babak Barazandeh
\and 
Meisam Razaviyayn}
\date{
University of Southern California\\
\email{\{barazand,razaviya\}@usc.edu}
}
% \title{Solving Non-Convex Non-Differentiable Min-Max Games using Proximal Gradient Method}
% \name{Babak Barazandeh \qquad \qquad  Meisam Razaviyayn }

% %\address{$^{\star}$ Affiliation Number One }\\

% \address{ University of Southern California\\
% \{barazand, razaviya\}@usc.edu}
%\twoauthors{Babak Barazandeh}{University of Southern California}{Meisam Razaviyayn}{University of Southern California}
% \name{ \quad \quad }
% \address{
% University of Southern California}
%
% Single address.
% % ---------------
% \name{Author(s) Name(s)\thanks{Thanks to XYZ agency for funding.}}
% \address{Author Affiliation(s)}
%
% For example:
% ------------
%\address{School\\
%	Department\\
%	Address}
%
% Two addresses (uncomment and modify for two-address case).
% ----------------------------------------------------------
 
\begin{document}
%\ninept
%
\maketitle
\begin{abstract}
Min-max saddle point games appear in a wide range of applications in machine leaning and signal processing. Despite their wide applicability, theoretical studies are mostly limited to the special convex-concave structure. While some recent works generalized these results to special smooth non-convex cases, our understanding of non-smooth scenarios is still limited. In this work, we study special form of non-smooth min-max games when the objective function is (strongly) convex with respect to one of the player's decision variable. We show that a simple multi-step proximal gradient descent-ascent algorithm converges to $\epsilon$-first-order Nash equilibrium of the min-max game with the number of gradient evaluations being polynomial in $1/\epsilon$. We will also show that our notion of stationarity is stronger than existing ones in the literature. Finally, we evaluate the performance of the proposed algorithm through  adversarial attack on a LASSO estimator.
\end{abstract}
\keywords{Non-convex min-max games,  First-order Nash equilibria, Proximal gradient descent ascent}

\section{Introduction}
Non-convex min-max  saddle point games appear in a wide range of applications such as training Generative Adversarial Networks~\cite{goodfellow2014generative,gulrajani2017improved,sanjabi2018convergence, barazandeh2019training}, fair statistical inference \cite{xu2018fairgan, madras2018learning, baharlouei2019r}, and training robust neural networks and systems \cite{madry2017towards, berger2013statistical,barazandeh2018behavior}. In such a game, the goal is to solve the optimization problem of the form
\begin{align}\label{sec:intro}
\min_{\btheta \in \cSt}\; \max_{\balpha \in \cSa} \;\;f(\btheta,\balpha),
\end{align}
which can be considered as a two player game where one player aims at increasing the objective, while the  other tries to minimize the objective. Using game theoretic point of view, we may aim for finding Nash equilibria~\cite{nash1950equilibrium} in which  no player can do better off by unilaterally changing its strategy.  Unfortunately, finding/checking such Nash equilibria is hard in general~\cite{murty1987some} for non-convex objective functions. Moreover, such Nash equilibria might not even exist. %\cite{jin2019minmax}. 
Therefore, many works focus on special cases such as  convex-concave problems where $f(\btheta,.)$ is  concave for any given $\btheta$ and $f(.,\balpha)$ is  convex for any given $\balpha$. Under this assumption, different algorithms  such as optimistic mirror descent \cite{rakhlin2013optimization, mertikopoulos2019optimistic, daskalakis2018last,mokhtari2019unified},  Frank-Wolfe algorithm \cite{gidel2016frank,abernethy2017frank} and Primal-Dual method~\cite{hamedani2018iteration} have been studied.

\vspace{0.2cm}

In the general non-convex settings, \cite{rafique2018non} considers the weakly convex-concave case and proposes a primal-dual based approach for finding approximate stationary solutions. 
More recently, the research works~\cite{lu2019block, lu2019hybrid, nouiehed2019solving, ostrovskii2020efficient} examine the min-max problem in non-convex-(strongly)-concave cases and proposed first-order algorithms for solving them. 
 Some of the results have been accelerated in the ``Moreau envelope regime" by the recent interesting work~\cite{thekumparampil2019efficient}. This work first starts by studying the problem in smooth strongly convex-concave and convex-concave settings, and proposes an algorithm based on the combination of Mirror-Prox \cite{juditsky2011solving} and Nesterov's accelerated gradient descent \cite{nesterov1998introductory} methods. Then the algorithm is extended  to the smooth non-convex-concave scenario.
Some of the aforementioned results are extended to zeroth-order methods for solving non-convex-concave min-max optimization problems~\cite{liu2019min,wang2020zeroth}. % proposes a zeroth-order method for solving constrained min-max saddle point problem. 
As a first step toward solving non-convex non-concave min-max problems, \cite{nouiehed2019solving} studies a  class of games in which one of the players satisfies the Polyak-\L{}ojasiewic(PL) condition and the other player has a general non-convex structure. More recently, the work~\cite{yang2020global} studied the two sided PL min-max games and proposed a variance reduced strategy for solving these games.

\vspace{0.2cm}

%However, in all these problems, the objective function is smooth.

While almost all existing efforts focus on smooth min-max problems, in this work, we study  non-differentiable, non-convex-strongly-concave and non-convex-concave games and propose an  algorithm for computing their first-order Nash equilibria. %by considering the fact that in a general zero-sum game a Nash equilibrium point might not exist, we might need to define new equilibrium points that will be discussed in more details in next section.
\section{Problem Definition}

Consider the min-max zero-sum game 
\begin{align}
\min_{\btheta \in \cSt}\; \max_{\balpha \in \cSa} \;\;(f(\btheta,\balpha) \triangleq h(\btheta,\balpha) -p(\balpha) + q(\btheta)),\label{eq: game1-cons}
\end{align}
where we assume that the constraint sets and the objective function satisfy the following assumptions throughout the paper.
\begin{asu}
\label{assumption:Concavity}
The sets $\cSt \subseteq \mathbb{R}^{d_\theta}$ and $\cSa \subseteq \mathbb{R}^{d_\alpha}$ are convex and compact. Moreover, there exist two separate balls with radius $R$ that contains the feasible sets~$\cSa$ and~$\cSt$. 
\end{asu}
\begin{asu}
\label{assumption:objective}
The functions $h(\btheta,\balpha)$ is continuously differentiable, $p(\cdot)$ and $q(\cdot)$ are convex and (potentially) non-differentiable, $p(\cdot)$ is $L_{p}$-Lipschitz continuous and $q(\cdot)$ is continuous.  
\end{asu}
%\end{restatable}

%In our problem setup we assume that the function $h(\btheta, \balpha)$ is smooth in both $\balpha$ and $\btheta$ and $p(\balpha)$ is Lipshitz continuous. 

\begin{asu}%{restatable}{asu}{aa}
\label{assumption: LipSmooth-uncons} 
	The function~$h(\btheta, \balpha)$ is continuously differentiable in both $\btheta$ and $\balpha$ and there exist constants $L_{11}$, $L_{22}$ and $L_{12}$ such that for every $\balpha,\balpha_1,\balpha_{2}\in \cSa$, and $\btheta,\btheta_1,\btheta_2 \in \cSt$, we have
	\[	\begin{array}{ll}
	&\|\nabla_{\btheta} h(\btheta_1,\balpha)-\nabla_{\btheta} h(\btheta_2,\balpha)\|\leq L_{11}\|\btheta_1-\btheta_2\|,\nonumber\\
	& \|\nabla_{\balpha} h(\btheta,\balpha_1)-\nabla_{\balpha} h(\btheta,\balpha_{2})\|\leq L_{22}\|\balpha_1-\balpha_{2}\|,\\
	&\|\nabla_{\balpha} h(\btheta_1,\balpha)-\nabla_{\balpha} h(\btheta_2,\balpha)\|\leq L_{12}\|\btheta_1-\btheta_2\|,\nonumber\\
	& \|\nabla_{\btheta} h(\btheta,\balpha_1)-\nabla_{\btheta} h(\btheta,\balpha_{2})\|\leq L_{12}\|\balpha_1-\balpha_{2}\|.\\
	\end{array}  \]
	\normalsize
% Additionally, we assume that the function $p$ is Lipschitz continuous, i.e., $	\| p(\balpha_2)-p(\balpha_1)\|\leq L_{p}\|\balpha_2-\balpha_1\|$.	
%\end{restatable}
\end{asu}

\vspace{0.2cm}

To proceed, let us first define some preliminary concepts:

% \begin{restatable}{lem}{lmggsmoot}\label{lm:gg-smooth}
% \end{restatable}
\begin{restatable}{defi}{df}{\normalfont (Directional Derivative)}\label{def.dd}
	Let $\psi: \mathbb{R}^n \rightarrow \mathbb{R}$ and $\bar{\bx} \in dom(\psi)$. The directional derivative of $\psi$ at the point $\bar{\bx}$ along the direction $\bd$ is defined as
	\begin{align*}
	\psi^{'}({\bar{\bx}};\bd) = \lim\limits_{\tau \downarrow 0} \frac{\psi({\bar{\bx}} + \tau \bd)  - \psi({\bar{\bx}})}{\tau}.
	\end{align*}
	We say that $\psi$ is directionally differentiable at $\bar{\bx}$ if the above limit exists for all $\bd \in \mathbb{R}^n$. It can be shown that any convex function is directionally differentiable.     
\end{restatable}
% \begin{definition}[Directional Derivative] 
% 	\label{def.dd}
% 	Let $\psi_{\bx}: \mathbb{R}^n \rightarrow \mathbb{R}$ and $\bar{\bx} \in dom(\psi)$. Directional derivative of $\psi$ at point $\bar{\bx}$ along the direction $\bd$ is defined as,
% 	\begin{align*}
% 	\psi^{'}_{\bx}(\bx;\bd) = \lim\limits_{\tau \downarrow 0} \frac{\psi(\bx + \tau \bd)  - \psi(\bx)}{\tau}.
% 	\end{align*}
% 	We say that $\psi$ is directionally differentiable at $\bar{\bx}$ if $\psi^{'}(\bar{\bx};\bd)$ exists for all $\bd \in \mathbb{R}^n$. It can be shown that any convex function is directionally differentiable.     
% \end{definition}
% \begin{restatable}{defi}{df}\label{def.dd}

\begin{restatable}{defi}{dff}{\normalfont (FNE)}
	\label{def.FNE}
	A point $(\btheta^*,\balpha^*)\in \cSt\times \cSa$ is a \textit{first-order Nash equilibrium} (FNE) of the game~\eqref{eq: game1-cons} if  
	\begin{align*}
	    &f'_{\btheta}(\btheta^*, \balpha^*; \btheta - \btheta^*)\geq 0 \quad \forall \btheta \in \cSt, \\
	    & f'_{\balpha}(\btheta^*, \balpha^*; \balpha - \balpha^*)\leq 0 \quad \forall \balpha \in \cSa;
	\end{align*}
	or equivalently if 
% 	\begin{align}\label{eq:FNEcond1} \nonumber
% 	&\Big[\langle \nabla_{\btheta} h(\btheta^*, \balpha^*), \btheta -\btheta^* \rangle + q(\btheta) - q(\btheta^*)+ \frac{1}{2}||\btheta - \btheta^*||^2\Big] \\ &   \geq 0,\; \;\;\forall \; \btheta \in \cSt,  
% 	\\ \nonumber \label{eq:FNEcond2}&\Big[\langle \nabla_{\balpha} h(\btheta^*, \balpha^*), \balpha -\balpha^* \rangle  - p(\balpha) + p(\balpha^*) - \frac{1}{2} ||\balpha - \balpha^*||^2\Big] \\
% 	& \leq 0, \;\;\;\forall \;\balpha \in \cSa.
% 	\end{align}
\small
\begin{align}\small
	&\langle \nabla_{\btheta} h(\btheta^*, \balpha^*), \btheta -\btheta^* \rangle + q(\btheta) - q(\btheta^*)+ \frac{M}{2}||\btheta - \btheta^*||^2   \geq 0,\nonumber\\
	%\; \;\;\forall \; \btheta \in \cSt, 
% \]
% and
% \[
&\langle \nabla_{\balpha} h(\btheta^*, \balpha^*), \balpha -\balpha^* \rangle  - p(\balpha) + p(\balpha^*) - \frac{M}{2} ||\balpha - \balpha^*||^2 \leq 0, \nonumber%\;\;\;\forall \;\balpha \in \cSa.
\end{align}\normalsize
for all $ \btheta \in \cSt$ and $\balpha \in \cSa$; and all $M>0$.
\end{restatable}
\normalsize
% It is important to notice that by using descent lemma it can be shown that in unconstrained scenario,  this definition includes the first order necessary optimality condition for both players because it results in $\nabla_{\btheta} h(\btheta^*, \balpha^*) + \nabla_{\btheta} q(\btheta^*) = 0$ and $ \nabla_{\balpha} h(\btheta^*, \balpha^*) - \nabla_{\balpha} p(\balpha^*) = 0$.
%Notice that since $\btheta^*$ and $\balpha^*$ are the optimal minimizer and maximizer of ~\eqref{eq:FNEcond1} and ~\eqref{eq:FNEcond2}, the equivalency can be easily derived using their respective first-order necessary optimality condition. 

This definition implies that, at the first-order Nash equilibrium point, each player satisfies the first-order necessary optimality condition of its own objective when the other player's strategy is fixed. This is also equivalent to saying we have found the solution to the corresponding variational inequality \cite{harker1990finite}.
Moreover, in the unconstrained  smooth case that $\cSt = \mathbb{R}^{d_\theta}$, $\cSa =\mathbb{R}^{d_\alpha}$, and $p\equiv q \equiv 0$, this definition reduces to the standard widely used definition~$\nabla_{\balpha} h(\btheta^*,\balpha^*) = 0$ and $\nabla_{\btheta} h(\btheta^*,\balpha^*) = 0$.  % \cite{nouiehed2019solving}.

\vspace{0.2cm}

In practice,  we use iterative methods  for solving such games and it is natural to evaluate the performance of the algorithms based on their efficiency in finding an approximate-FNE point. To this end, let us  define the concept of approximate-FNE point: 
 
\begin{restatable}{defi}{dfff}(Approximate-FNE)\label{def:cons-approx-stationarity}
	A point $(\bar{\btheta}, \bar{\balpha})$ is said to be an $\epsilon$--first-order Nash equilibrium ($\epsilon$--FNE) of the game~\eqref{eq: game1-cons} if
	\[
	\cX(\bar{\btheta}, \bar{\balpha}) \leq \epsilon^2 \quad \mbox{and} \quad \cY(\bar{\btheta}, \bar{\balpha}) \leq \epsilon^2,
	\]
	where 
	\begin{align*}
		\cX(\bar{\btheta}, \bar{\balpha}) \triangleq - 2 L_{11}\min_{ \btheta  \in \cSt}\,\,\Big[ & \langle \nabla_{\btheta} h(\bar{\btheta}, \bar{\balpha}), \btheta -\bar{\btheta} \rangle + q(\btheta) - q(\bar{\btheta}) 
	 +  \frac{L_{11}}{2} ||\btheta - \bar{\btheta}||^2  \Big],    
	\end{align*}\label{eq:X_k2}
	and
	\begin{align*}\label{eq:Y_k2}
	\cY(\bar{\btheta}, \bar{\balpha}) \triangleq 2 L_{22}\max_{\balpha \in \cSa}\,\,  \Big[ &	\langle \nabla_{\balpha} h(\bar{\btheta}, \bar{\balpha}), \balpha -\bar{\balpha} \rangle  - p(\balpha) + p(\bar{\balpha}) 
	-   \frac{L_{22}}{2} ||\balpha - \bar{\balpha}||^2 \Big].
	\end{align*}
\end{restatable}

\vspace{0.2cm}

In the unconstrained and smooth scenario that $\cSt = \mathbb{R}^{d_\theta}$, $\cSa =\mathbb{R}^{d_\alpha}$, and $p\equiv q \equiv 0$, the above $\epsilon$-FNE definition reduces to $\|{\nabla}_{\balpha} h(\bar{\btheta}, \bar{\balpha})\| \leq \epsilon$ and $\|{\nabla}_{\btheta} h(\bar{\btheta}, \bar{\balpha})\| \leq \epsilon$. 
\vspace{0.2cm}
\begin{rmk}\label{optimilaity_condtion}
The above  definition of $\epsilon$--FNE is stronger than the $\epsilon$-stationarity concept defined based on the proximal gradient norm in the literature (see, e.g.,~\cite{lin2019gradient}). Details of this remark is discussed in the Appendix section. 
\end{rmk}
% \noindent \textbf{Remark 1.} Above definition for $\epsilon$--FNE is stronger than the commonly used proximal gradient norm in the literature \cite{lin2019gradient}. Details of this remark is discussed in the appendix.  
\begin{rmk}
\label{remark:1}(Rephrased from Proposition 4.2 in \cite{pang2016unified}) For the min-max game~\eqref{eq: game1-cons}, under assumptions \ref{assumption:Concavity}, \ref{assumption:objective} and \ref{assumption: LipSmooth-uncons}, FNE always exists. Moreover, it is easy to show that $\cX(\cdot,\cdot)$ and $\cY(\cdot,\cdot)$ are continuous functions in their arguments. Hence, $\epsilon$--FNE exists for every $\epsilon \geq 0$.
\end{rmk}

% \noindent \textbf{Remark 2.} (Rephrased from Proposition 4.2 in \cite{pang2016unified}) For the min-max game~\eqref{eq: game1-cons}, under assumptions \ref{assumption:Concavity}, \ref{assumption:objective} and \ref{assumption: LipSmooth-uncons}, FNE always exists. Moreover, it is easy to show that $\cX(\cdot,\cdot)$ and $\cY(\cdot,\cdot)$ are continuous functions in their arguments. Hence, $\epsilon$--FNE exists for every $\epsilon \geq 0$.
\vspace{0.2cm}

In what follows, we consider two different scenarios for finding $\epsilon$-FNE points. In the first scenario, we assume that~$h(\btheta,\balpha)$ is strongly concave in $\balpha$ for every given $\btheta$ and develop a first-order algorithm for finding $\epsilon$-FNE. Then, in the second scenario, we extend our result to the case where $h(\btheta,\balpha)$ is concave (but not strongly concave) in $\balpha$ for every given $\btheta$.
\section{Non-Convex Strongly-Concave Games}
In this section, we study the zero-sum game \eqref{eq: game1-cons} in the case that the function $h(\btheta,\balpha)$ is $\sigma$-strongly concave in $\balpha$ for every given value of~$\btheta$. 
% \begin{definition}[$\sigma$- strongly concave function ]\label{def: PL}
% 	A  function $l(\bx)$ is said to be $\sigma$- strongly concave if
% 	\begin{align}
% 	l(\bx_2) \leq h(\bx_1)  + l^{'}(\bx_1; \bx_2 - \bx_1) - \frac{\sigma}{2} ||\bx_2 - \bx_1||^2,
% 	\end{align}
% 	where $l^{'}(\bx;\bd)$ is the directional derivative of the function at point $\bx$ in the direction $\bd$.
% \end{definition}
To understand the idea behind the algorithm, let us define the auxiliary function
\[
g(\btheta) \triangleq  \max\limits_{\balpha \in \cSa} h(\btheta, \balpha) - p(\balpha).
\]
A ``conceptual" algorithm for solving the min-max optimization problem~\eqref{eq: game1-cons} is to minimize the function $g(\btheta) + q(\btheta)$ using iterative decent procedures. First, notice that,  based on the following lemma, the strong concavity assumption implies the differentiability of~$g(\btheta)$. 

\begin{restatable}{lem}{lmggsmoot}\label{lm:gg-smooth}
	Let $g(\btheta) = \max\limits_{\balpha \in \cSa} h(\btheta, \balpha) - p(\balpha)$ in which the function $h(\btheta, \balpha)$ is $\sigma$-strongly concave in $\balpha$ for any given~$\btheta$. Then, under Assumption~\ref{assumption: LipSmooth-uncons}, the function $g(\btheta)$ is differentiable. Moreover, its gradient is $L_g$-Lipschitz continuous, i.e.,
	\[
\|\nabla g(\btheta_1) - \nabla g(\btheta_2)\| \leq L_g\| \btheta_1 - \btheta_2 \|,
\]
where $L_g = L_{11} + \dfrac{L_{12}^2}{\sigma}$.
\end{restatable}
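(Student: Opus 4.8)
This is the classical Danskin-type smoothness result for the value function of a strongly concave inner maximization, so I would follow the standard route. Let $\balpha^*(\btheta) \triangleq \argmax_{\balpha \in \cSa} \big[ h(\btheta,\balpha) - p(\balpha) \big]$. First I would argue that $\balpha^*(\btheta)$ is \emph{well-defined and unique}: for each fixed $\btheta$ the map $\balpha \mapsto h(\btheta,\balpha) - p(\balpha)$ is $\sigma$-strongly concave (sum of a $\sigma$-strongly concave function and a concave function) and upper semicontinuous on the convex compact set $\cSa$, hence attains its maximum at a unique point.

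\textbf{Differentiability and the gradient formula.} Next I would establish that $g$ is differentiable with $\nabla g(\btheta) = \nabla_{\btheta} h(\btheta, \balpha^*(\btheta))$. This is Danskin's theorem, but since $p$ is nonsmooth and $\cSa$ is constrained I would give the argument directly: from the definition of $g$ as a max, for any $\btheta_1,\btheta_2$ one gets the two-sided bound
\[
h(\btheta_2, \balpha^*(\btheta_1)) - p(\balpha^*(\btheta_1)) \;\le\; g(\btheta_2) \;\le\; h(\btheta_2,\balpha^*(\btheta_2)) - p(\balpha^*(\btheta_2)),
\]
and subtracting $g(\btheta_1)$ and using the analogous inequalities shows $g$ is sandwiched between two differentiable-at-$\btheta_1$ expressions sharing the gradient $\nabla_{\btheta}h(\btheta_1,\balpha^*(\btheta_1))$; a squeeze argument then yields differentiability of $g$ at $\btheta_1$ with that gradient. (Alternatively, one may invoke the known nonsmooth Danskin result directly.)

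\textbf{Lipschitz continuity of $\balpha^*$.} The crux is to show $\balpha^*(\cdot)$ is $\tfrac{L_{12}}{\sigma}$-Lipschitz. I would use the first-order optimality (variational inequality) characterization of $\balpha^*(\btheta)$: for all $\balpha \in \cSa$,
\[
\langle \nabla_{\balpha} h(\btheta, \balpha^*(\btheta)), \balpha - \balpha^*(\btheta)\rangle - p(\balpha) + p(\balpha^*(\btheta)) \le 0.
\]
Writing this at $\btheta_1$ with test point $\balpha^*(\btheta_2)$ and at $\btheta_2$ with test point $\balpha^*(\btheta_1)$, adding the two inequalities, and using $\sigma$-strong concavity (which contributes a $-\sigma\|\balpha^*(\btheta_1)-\balpha^*(\btheta_2)\|^2$ term) together with the cross-Lipschitz bound $\|\nabla_{\balpha}h(\btheta_1,\cdot) - \nabla_{\balpha}h(\btheta_2,\cdot)\| \le L_{12}\|\btheta_1-\btheta_2\|$ from Assumption~\ref{assumption: LipSmooth-uncons}, the $p$-terms cancel and one obtains $\sigma\|\balpha^*(\btheta_1)-\balpha^*(\btheta_2)\|^2 \le L_{12}\|\btheta_1-\btheta_2\|\,\|\balpha^*(\btheta_1)-\balpha^*(\btheta_2)\|$, i.e.\ the claimed Lipschitz bound.

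\textbf{Assembling the $L_g$ bound.} Finally, combine:
\[
\|\nabla g(\btheta_1) - \nabla g(\btheta_2)\| = \|\nabla_{\btheta}h(\btheta_1,\balpha^*(\btheta_1)) - \nabla_{\btheta}h(\btheta_2,\balpha^*(\btheta_2))\|,
\]
add and subtract $\nabla_{\btheta}h(\btheta_2,\balpha^*(\btheta_1))$, apply the triangle inequality, and bound the first piece by $L_{11}\|\btheta_1-\btheta_2\|$ (Lipschitz gradient in $\btheta$) and the second by $L_{12}\|\balpha^*(\btheta_1)-\balpha^*(\btheta_2)\| \le \tfrac{L_{12}^2}{\sigma}\|\btheta_1-\btheta_2\|$ (cross-Lipschitz plus the Lipschitz bound on $\balpha^*$). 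This gives $L_g = L_{11} + L_{12}^2/\sigma$. I expect the main obstacle to be handling the nonsmoothness of $p$ cleanly in the differentiability step — the Lipschitz-of-$\balpha^*$ argument is robust to it because the $p$-terms cancel by convexity, but the Danskin differentiability claim needs the squeeze argument (or a careful citation) rather than a naive envelope-theorem computation.
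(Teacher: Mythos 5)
Your proposal is correct and follows essentially the same route as the paper: both hinge on the bound $\|\balpha^*(\btheta_1)-\balpha^*(\btheta_2)\|\le \frac{L_{12}}{\sigma}\|\btheta_1-\btheta_2\|$ obtained by combining the optimality conditions at $\btheta_1,\btheta_2$ with $\sigma$-strong concavity (the paper phrases this via directional derivatives of $l=-h+p$, you via the composite variational inequality in which the $p$-terms cancel), followed by the identical add-and-subtract of $\nabla_{\btheta}h(\btheta_2,\balpha^*(\btheta_1))$ to get $L_g=L_{11}+L_{12}^2/\sigma$. The only cosmetic difference is the differentiability step, which the paper settles by citing Danskin's theorem (applicable since $p$ does not depend on $\btheta$) while you sketch an equivalent squeeze argument.
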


% \begin{lemma}\label{lm:gg-smooth}
% 	Define the function $g(\btheta) = \max\limits_{\balpha \in \cSa} h(\btheta, \balpha) - p(\balpha)$ in which $p(\balpha)$ is concave and (potentially) non-differentiable and  the function $h(\btheta, \balpha)$ is $\sigma$-strongly concave satisfying   assumption~\ref{assumption: LipSmooth-uncons}. Then the function $g(\btheta)$ is $L$-Lipschitz continuous gradient with $L = L_{11} + \dfrac{L_{12}^2}{\sigma}$.
% \end{lemma}

The smoothness of the function~$g(\btheta)$ suggests the natural multi-step proximal method in Algorithm~\ref{alg: alg_grad} for solving the min-max optimization problem~\eqref{eq: game1-cons}. This algorithm performs two major steps in each iteration: the first major step, which is marked as ``Accelerated Proximal Gradient Ascent",  runs multiple iterations of the accelerated proximal gradient ascent to estimate the solution of the inner maximization problem. In other words, this step finds a point $\alpha_{t+1}$ such that
\vspace{-.2cm}
\[
\balpha_{t+1}\approx \arg\max_{\balpha \in \cSa} f(\btheta_t,\balpha).
\]
\normalsize
The output of this step will then be used to compute the approximate proximal gradient  of the function $g(\btheta)$ in the second step based on the classical Danskin's theorem~\cite{bernhard1995theorem, danskin1967theory}, which is restated below:

\vspace{0.2cm}

\begin{thm}%{\normalfont[modified version of Theorem 1 in \cite{bernhard1995theorem}]}
[Rephrased from \cite{bernhard1995theorem, danskin1967theory}]
\label{thm:danskin}
Let $V\subset \mathbb{R}^m$  be a compact set and $J(\mathbf{u},\bm{\nu}): \mathbb{R}^n \times V \mapsto \mathbb{R}$  be  differentiable with respect to $\textbf{u}$.  Let $\bar{J}(\textbf{u}) = \max\limits_{\bm{\nu} \in V}\; J(\textbf{u},\bm{\nu})$ and assume $\hat{V}(\textbf{u}) = \{\bm{\nu} \in V\;|\;J(\textbf{u},\bm{\nu}) = \bar{J}(\textbf{u})\}$ is singleton for any given $\mathbf{u}$. Then, $\bar{J}(\textbf{u})$ is differentiable and 
$\nabla_{\textbf{u}} \bar{J}(\textbf{u}) =  \nabla_{\textbf{u}} J(\textbf{u},\hat{\bm{\nu}})$ with $\hat{\bm{\nu}} \in \hat{V}(\mathbf{u})$. %{\color{red} Double check the correctness of modification}
\end{thm}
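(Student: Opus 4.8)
The plan is to compute the one-sided directional derivatives of $\bar{J}$ directly from the definition, show they coincide with $\langle \nabla_{\mathbf{u}} J(\mathbf{u},\hat{\bm{\nu}}),\mathbf{d}\rangle$, and then upgrade the resulting Gateaux differentiability to full differentiability by a short continuity argument. Fix $\mathbf{u}$ and write $\hat{\bm{\nu}}=\hat{\bm{\nu}}(\mathbf{u})$ for the unique maximizer, and fix a direction $\mathbf{d}$. Everything rests on the elementary inequality $\bar{J}(\mathbf{u}')\geq J(\mathbf{u}',\bm{\nu})$ for every $\bm{\nu}\in V$, with equality at $\bm{\nu}=\hat{\bm{\nu}}(\mathbf{u}')$. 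For the lower bound, for $\tau>0$ we have $\bar{J}(\mathbf{u}+\tau\mathbf{d})\geq J(\mathbf{u}+\tau\mathbf{d},\hat{\bm{\nu}})$ and $\bar{J}(\mathbf{u})=J(\mathbf{u},\hat{\bm{\nu}})$, so dividing $\bar{J}(\mathbf{u}+\tau\mathbf{d})-\bar{J}(\mathbf{u})\geq J(\mathbf{u}+\tau\mathbf{d},\hat{\bm{\nu}})-J(\mathbf{u},\hat{\bm{\nu}})$ by $\tau$ and letting $\tau\downarrow 0$ gives $\liminf_{\tau\downarrow0}\tfrac{\bar{J}(\mathbf{u}+\tau\mathbf{d})-\bar{J}(\mathbf{u})}{\tau}\geq\langle\nabla_{\mathbf{u}}J(\mathbf{u},\hat{\bm{\nu}}),\mathbf{d}\rangle$ by differentiability of $J(\cdot,\hat{\bm{\nu}})$.

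For the upper bound, take any sequence $\tau_k\downarrow0$ and choose $\bm{\nu}_k\in\hat{V}(\mathbf{u}+\tau_k\mathbf{d})$. By compactness of $V$ a subsequence satisfies $\bm{\nu}_k\to\bar{\bm{\nu}}\in V$, and by continuity of $J$ (hence of $\bar{J}$) one checks $J(\mathbf{u},\bar{\bm{\nu}})=\bar{J}(\mathbf{u})$, so the singleton hypothesis forces $\bar{\bm{\nu}}=\hat{\bm{\nu}}$. Along this subsequence $\bar{J}(\mathbf{u}+\tau_k\mathbf{d})-\bar{J}(\mathbf{u})=J(\mathbf{u}+\tau_k\mathbf{d},\bm{\nu}_k)-\bar{J}(\mathbf{u})\leq J(\mathbf{u}+\tau_k\mathbf{d},\bm{\nu}_k)-J(\mathbf{u},\bm{\nu}_k)$, and the mean value theorem in the $\mathbf{u}$ variable bounds the right-hand side by $\tau_k\langle\nabla_{\mathbf{u}}J(\mathbf{u}+s_k\tau_k\mathbf{d},\bm{\nu}_k),\mathbf{d}\rangle$ for some $s_k\in[0,1]$; dividing by $\tau_k$ and passing to the limit (using continuity of $\nabla_{\mathbf{u}}J$ and $\bm{\nu}_k\to\hat{\bm{\nu}}$) gives $\limsup\leq\langle\nabla_{\mathbf{u}}J(\mathbf{u},\hat{\bm{\nu}}),\mathbf{d}\rangle$ along the subsequence. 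A standard ``every subsequence has a further subsequence with the same limit'' argument upgrades this to the full limit, so $\bar{J}'(\mathbf{u};\mathbf{d})=\langle\nabla_{\mathbf{u}}J(\mathbf{u},\hat{\bm{\nu}}),\mathbf{d}\rangle$ for all $\mathbf{d}$.

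Since $\mathbf{d}\mapsto\langle\nabla_{\mathbf{u}}J(\mathbf{u},\hat{\bm{\nu}}),\mathbf{d}\rangle$ is linear, $\bar{J}$ is Gateaux differentiable at every $\mathbf{u}$ with gradient $\nabla_{\mathbf{u}}J(\mathbf{u},\hat{\bm{\nu}}(\mathbf{u}))$. The map $\mathbf{u}\mapsto\hat{\bm{\nu}}(\mathbf{u})$ is continuous (uniqueness of the maximizer together with compactness of $V$ and continuity of $J$, a Berge-type maximum-theorem argument), hence $\mathbf{u}\mapsto\nabla_{\mathbf{u}}\bar{J}(\mathbf{u})$ is continuous; a Gateaux-differentiable function with continuous gradient is $C^1$ and therefore (Fr\'echet) differentiable, which yields the stated conclusion.

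The main obstacle is the upper bound: one must control near-maximizers at the perturbed points $\mathbf{u}+\tau_k\mathbf{d}$, which is exactly where compactness of $V$ (to extract a limit) and the uniqueness hypothesis (to identify that limit as $\hat{\bm{\nu}}(\mathbf{u})$) are used; and pushing the limit through $\nabla_{\mathbf{u}}J$ requires joint continuity of this gradient, slightly more than the bare ``differentiable with respect to $\mathbf{u}$'' stated — so I would either add that mild regularity explicitly (it holds in our application, where $J=h$ is $C^1$ with Lipschitz gradients by Assumption~\ref{assumption: LipSmooth-uncons}) or replace the mean-value step by a uniform-differentiability estimate. The lower bound, the linearity observation, and the Gateaux-to-Fr\'echet upgrade are all routine.
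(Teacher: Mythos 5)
The paper itself offers no proof of this statement -- Theorem~\ref{thm:danskin} is quoted as a classical result from the cited references and is used as a black box -- so there is nothing internal to compare against; your argument is the standard textbook proof of Danskin's theorem and it is sound. The lower bound via $\bar{J}(\mathbf{u}+\tau\mathbf{d})\geq J(\mathbf{u}+\tau\mathbf{d},\hat{\bm{\nu}})$, the upper bound via maximizers at the perturbed points plus compactness and the singleton hypothesis, and the Gateaux-to-Fr\'echet upgrade through continuity of $\mathbf{u}\mapsto\hat{\bm{\nu}}(\mathbf{u})$ (Berge) are exactly the right ingredients. Your flagged caveat is also correct and worth keeping: as literally stated, ``differentiable with respect to $\mathbf{u}$'' is not enough -- you need $J$ jointly continuous on $\mathbb{R}^n\times V$ (so that $\hat{V}(\mathbf{u}+\tau_k\mathbf{d})$ is nonempty, $\bar{J}$ is continuous, and the limit point $\bar{\bm{\nu}}$ can be identified with $\hat{\bm{\nu}}$) and joint continuity of $\nabla_{\mathbf{u}}J$ (to pass to the limit in the mean-value step and to make the gradient of $\bar{J}$ continuous). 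These are precisely the hypotheses of the cited classical versions, and they hold in the paper's application, where $J=h$ satisfies Assumption~\ref{assumption: LipSmooth-uncons}; so your proof, with that mild regularity made explicit, is complete.
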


\vspace{0.2cm}

According to the above lemma, the proximal gradient descent update rule on $g(\btheta)$ will be given by
\begin{align*} 
    \btheta_{t+1} = \arg\min\limits_{\btheta \in \cSt} \Big[& q(\btheta)+ \langle \nabla_{\btheta}h(\btheta_{t}, \balpha_{t+1}), \btheta - \btheta^{t}\rangle
     + \frac{L_g}{2} \|\btheta - \btheta_{t}\|^2\Big].
\end{align*}
%This multi-step proximal gradient descent-ascent method is detailed in . 
The two main proximal  gradient update operators used in Algorithm~\ref{alg: alg_grad}  are defines as

\[
\rho_{\balpha}(\tilde{\btheta},\tilde{\balpha}, \gamma_1) = \arg\max\limits_{\balpha \in \cSa} \;\; \langle \nabla_{\balpha}
		h(\tilde{\btheta},\tilde{\balpha}), \balpha -\tilde{\balpha}\rangle  -\frac{\gamma_1}{2}\|\balpha -\tilde{\balpha}\|^2 -p(\balpha)
		\]
	and 
	\[\rho_{\btheta}(\tilde{\btheta},\tilde{\balpha}, \gamma_2) = \arg\min\limits_{\btheta \in \cSt} \;\; \langle \nabla_{\btheta}h(\tilde{\btheta}, \tilde{\balpha}), \btheta - \tilde{\btheta}\rangle  + \frac{\gamma_2}{2} \|\btheta - \tilde{\btheta}\|^2 + q(\btheta).
	\]
 \normalsize
% \begin{algorithm}
% 	\caption{Multi-step Proximal Gradient Descent-Ascent} 
% 	\label{alg: alg_grad_strong}
% 	\begin{algorithmic}[1]
% 		\State \textbf{Input}:  $K$, $T$, $\eta_1 = 1/L_{22}$, $\eta_2 = 1/L$, $\balpha_0 \in \cSa$ and $\btheta_0 \in \cSt$, $L_{22}:$ Lipschitz constant for gradient of $h(.,.)$ as in \ref{assumption: LipSmooth-uncons}, $L:$ Lipschitz constant  for gradient of $g(\btheta)$ defined in Lemma~\ref{lm:gg-smooth}. 
		
% 		\For  {$t=0, \cdots, T-1$} %\label{alg1:outer-loop}
% 		\State Set $\balpha_{(0,t)} = \balpha_t$ 
% 		\For {$k=0, \cdots, K-1$} %\label{alg1:inner-loop}
% 		\State $\balpha_{(k+1,t)} =  \rho_{\balpha}(\btheta_t,\balpha_{(k,t)}, \eta_1)$
% 	    %\label{alg1:update-alpha}
% 		\EndFor
% 		\State $\balpha_{t+1} = \balpha_{(K,t)}$
% 		\State $\btheta_{t+1} = \rho_{\btheta}(\btheta_t,\balpha_{t+1}, \eta_2)$%\label{alg1:update-theta}
% 		\EndFor
% 		\State Return $(\btheta_t, \balpha_{t})$~for $t=0,\cdots,T-1$.
% 	\end{algorithmic}
% \end{algorithm}

\vspace{0.2cm}

The following theorem establishes the rate of convergence of  Algorithm~\ref{alg: alg_grad} to $\epsilon$-FNE. A more detailed statement of the theorem (which includes the constants of the theorem) is presented in the Appendix section.
% \begin{restatable}{thm}{Thmm}\label{thm:main}\footnote{Details for all of the proofs are omitted due to space limitation and are available in the arXiv version of the paper}
% 	Consider the min-max zero sum game
% \begin{align*}
% \min_{\btheta \in \cSt}\; \max_{\balpha \in \cSa} \;\;(f(\btheta,\balpha) = h(\btheta,\balpha) -p(\balpha) + q(\btheta)),
% \end{align*}
% where function $h(\btheta,\balpha)$ is $\sigma-$strongly concave. In Algorithm \ref{alg: alg_grad_strong}, if we choose $K$ and $T$ large enough such that	
% 	\[T \geq N_T(\epsilon) \triangleq {\cal O}(\epsilon^{-2}) \quad {\rm and} \quad	K \geq N_K(\epsilon) \triangleq {\cal O}(\log\big(\epsilon^{-1})\big),\]
% 	then there exists an iteration $t\in \{0,\cdots, T\}$ such that $(\btheta_t,\balpha_{t+1})$ is an $\epsilon$--FNE of \eqref{eq: game1-cons}. % for any given scalar $\epsilon \in (0,1)$.
% \end{restatable}

\vspace{0.2cm}

\begin{thm}\label{thm:main}[Informal Statement]
	Consider the min-max zero-sum game
\begin{align*}
\min_{\btheta \in \cSt}\; \max_{\balpha \in \cSa} \;\;\bigg(f(\btheta,\balpha) = h(\btheta,\balpha) -p(\balpha) + q(\btheta)\bigg),
\end{align*}
where function $h(\btheta,\balpha)$ is $\sigma-$strongly concave in $\balpha$ for any given $\btheta$. In Algorithm \ref{alg: alg_grad}, if we choose $\eta_1 = \frac{1}{L_{22}}, \eta_2 = \frac{1}{L_g}$, $N = \sqrt{8L_{22}/\sigma}-1$; and $K$ and $T$ large enough such that	
	\[T \geq N_T(\epsilon) \triangleq {\cal O}(\epsilon^{-2}) \quad {\rm and} \quad	K \geq N_K(\epsilon) \triangleq {\cal O}(\log\big(\epsilon^{-1})\big),\]
	then there exists an iterate $t\in \{0,\cdots, T-1\}$ such that $(\btheta_t,\balpha_{t+1})$ is an $\epsilon$--FNE of \eqref{eq: game1-cons}. %\footnote{Details of the constants in the theorems and the proofs are omitted due to space limitation and are available in the arXiv version of the paper.}% for any given scalar $\epsilon \in (0,1)$.
\end{thm}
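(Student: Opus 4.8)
The plan is to analyze Algorithm~\ref{alg: alg_grad} as an inexact proximal gradient method on the smooth function $g(\btheta) + q(\btheta)$, using Lemma~\ref{lm:gg-smooth} to control the smoothness constant $L_g$ and Danskin's theorem (Theorem~\ref{thm:danskin}) to identify $\nabla g(\btheta_t) = \nabla_{\btheta} h(\btheta_t, \balpha^*(\btheta_t))$, where $\balpha^*(\btheta_t) = \arg\max_{\balpha} f(\btheta_t,\balpha)$. The outer iteration uses the surrogate gradient $\nabla_{\btheta} h(\btheta_t, \balpha_{t+1})$ with $\balpha_{t+1}$ only an approximation of $\balpha^*(\btheta_t)$, so the first key step is to quantify this inexactness. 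Since $h(\btheta_t,\cdot) - p(\cdot)$ is $\sigma$-strongly concave, the accelerated proximal gradient ascent inner loop with $\eta_1 = 1/L_{22}$ and $N = \sqrt{8L_{22}/\sigma}-1$ converges linearly, so after $K$ inner iterations one gets $\|\balpha_{t+1} - \balpha^*(\btheta_t)\|^2 \le C \rho^K \|\balpha_0 - \balpha^*(\btheta_t)\|^2$ for some $\rho<1$; combined with the compactness from Assumption~\ref{assumption:Concavity} (diameter $\le 2R$) this gives $\|\balpha_{t+1} - \balpha^*(\btheta_t)\| \le \delta_K$ with $\delta_K$ shrinking geometrically in $K$. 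Then the Lipschitz bound $\|\nabla_{\btheta} h(\btheta_t,\balpha_{t+1}) - \nabla_{\btheta} h(\btheta_t, \balpha^*(\btheta_t))\| \le L_{12}\delta_K$ from Assumption~\ref{assumption: LipSmooth-uncons} transfers the error to the outer gradient.

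Next I would establish the standard descent inequality for inexact proximal gradient: with $\eta_2 = 1/L_g$, using $L_g$-smoothness of $g$ and the optimality of the proximal update $\btheta_{t+1} = \rho_{\btheta}(\btheta_t,\balpha_{t+1}, L_g)$, one obtains
\[
(g+q)(\btheta_{t+1}) \le (g+q)(\btheta_t) - \frac{c}{\eta_2}\|\btheta_{t+1}-\btheta_t\|^2 + \eta_2 L_{12}^2 \delta_K^2
\]
for an absolute constant $c>0$ (the error term comes from the gradient mismatch $L_{12}\delta_K$ entering quadratically). Telescoping over $t = 0,\dots,T-1$ and using that $g+q$ is bounded on the compact set $\cSt$ (Assumptions~\ref{assumption:Concavity},~\ref{assumption:objective}), this yields $\min_{t} \|\btheta_{t+1}-\btheta_t\|^2 \le \frac{\Delta}{cT} + \eta_2^2 L_{12}^2 \delta_K^2 \cdot \frac{1}{c}$, i.e.\ there is an index $t$ with $\|\btheta_{t+1}-\btheta_t\|$ small once $T = \mathcal{O}(\epsilon^{-2})$ and $K = \mathcal{O}(\log \epsilon^{-1})$.

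It then remains to translate ``small proximal step'' and ``small inner-loop error'' into the $\epsilon$-FNE conditions $\cX(\btheta_t,\balpha_{t+1}) \le \epsilon^2$ and $\cY(\btheta_t,\balpha_{t+1}) \le \epsilon^2$ of Definition~\ref{def:cons-approx-stationarity}. For $\cY$: since $\balpha_{t+1}$ is nearly the exact maximizer of the $L_{22}$-smooth strongly concave inner problem, a direct estimate using the prox-update optimality and $\|\balpha_{t+1}-\balpha^*(\btheta_t)\|\le\delta_K$ bounds $\cY(\btheta_t,\balpha_{t+1})$ by something of order $L_{22}^2\delta_K^2$ plus cross terms, which is $\le \epsilon^2$ for $K$ logarithmically large. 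For $\cX$: the quantity $\cX(\btheta_t,\balpha_{t+1})$ is, up to the factor $-2L_{11}$, the optimal value of a proximal-type subproblem at $(\btheta_t,\balpha_{t+1})$; I would compare it to the subproblem actually solved at step $t$ (which uses stepsize $L_g \ge L_{11}$ rather than $L_{11}$, and the surrogate gradient), bound the gap between the two minimized objectives using $\|\btheta_{t+1}-\btheta_t\|$ and the gradient error $L_{12}\delta_K$, and conclude $\cX \le \mathcal{O}(\|\btheta_{t+1}-\btheta_t\|^2 + \delta_K^2) \le \epsilon^2$. The main obstacle I anticipate is this last translation for $\cX$: reconciling the two different quadratic regularization weights ($L_{11}$ in the definition versus $L_g$ in the algorithm) while carrying the inexact-gradient error cleanly, and making sure the constants combine so that the same $t$ works for both $\cX$ and $\cY$ — this is the place where the precise choice $L_g = L_{11} + L_{12}^2/\sigma$ and a careful completion-of-squares argument are essential.
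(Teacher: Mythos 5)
There is a genuine gap, and it sits exactly where you flagged the "main obstacle": the passage from a small proximal step to $\cX(\btheta_t,\balpha_{t+1})\le\epsilon^2$. Your telescoping argument only retains the decrease $\tfrac{c}{\eta_2}\|\btheta_{t+1}-\btheta_t\|^2$, so with $T=\mathcal{O}(\epsilon^{-2})$ you obtain an iterate with step norm $\|\btheta_{t+1}-\btheta_t\|=\mathcal{O}(\epsilon)$ (plus an $\mathcal{O}(\delta_K)$ gradient-error contribution). But the measure $\cX$ is of the "second type" in the paper's appendix terminology (it is the optimal value of the linearized prox subproblem, scaled by $2L_{11}$), and the appendix theorem shows this is strictly stronger than smallness of the prox step: for $\min_z \tfrac12 z^2$ s.t.\ $z\ge 1$ at $\bar z=1+\epsilon$ the prox step has size $\epsilon$ while the second-type measure equals $2\epsilon+\epsilon^2$. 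So "step norm $\mathcal{O}(\epsilon)$ plus gradient error $\mathcal{O}(\delta_K)$" only yields $\cX=\mathcal{O}(\epsilon)$, i.e.\ an $\mathcal{O}(\sqrt{\epsilon})$-FNE; to reach $\cX\le\epsilon^2$ by your route you would need step norm $\mathcal{O}(\epsilon^2)$, hence $T=\mathcal{O}(\epsilon^{-4})$, which is weaker than the claimed rate. The paper avoids this loss by never passing through the step norm: since $\btheta_{t+1}$ exactly minimizes the surrogate subproblem, the descent lemma gives a per-iteration decrease equal to the full minimized value $\min_{\btheta\in\cSt}\big[\langle\nabla_{\btheta}h(\btheta_t,\balpha_{t+1}),\btheta-\btheta_t\rangle+q(\btheta)-q(\btheta_t)+\tfrac{L_g}{2}\|\btheta-\btheta_t\|^2\big]$, and the stepsize-monotonicity comparison (Lemma~1 of the cited work of Karimi et al.) converts the $L_g$-weighted value into the $L_{11}$-weighted one, giving $(g+q)(\btheta_{t+1})\le (g+q)(\btheta_t)-\tfrac{1}{2L_g}\cX(\btheta_t,\balpha_{t+1})+L_{12}R\,\|\balpha_{t+1}-\balpha^*(\btheta_t)\|$; summing this directly yields $\min_t\cX\le\epsilon^2$ with $T=\mathcal{O}(\epsilon^{-2})$. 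This is the completion-of-squares-free resolution of the $L_{11}$-versus-$L_g$ mismatch you anticipated, and it is essential to the advertised complexity.

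Two smaller points. For $\cY$, your claimed bound of order $L_{22}^2\delta_K^2$ is too optimistic: the paper's Lemma on the inner-loop error shows $\cY(\btheta_t,\balpha_{t+1})\le L_{22}(2L_{22}R+g_{\max}+L_p)\,\|\balpha_{t+1}-\balpha^*(\btheta_t)\|$, i.e.\ linear in the distance, with first-order cross terms involving $g_{\max}$, $L_p$ and $R$ dominating; since $\delta_K$ decays geometrically this still only costs $K=\mathcal{O}(\log(\epsilon^{-1}))$ (you need $\delta_K\lesssim\epsilon^2$ rather than $\epsilon$), so the rate is unaffected but the constant inside your logarithm is not. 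Finally, the worry about the same $t$ serving both conditions is vacuous: once $K$ is large enough the $\cY$ bound holds at every outer iteration, so any $t$ achieving $\cX(\btheta_t,\balpha_{t+1})\le\epsilon^2$ works.
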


\vspace{0.2cm}

\begin{center}

\begin{minipage}{0.85\textwidth}
\begin{algorithm}[H]
	\caption{Multi-step Accelerated Proximal Gradient Descent-Ascent} 
	\label{alg: alg_grad}
	\begin{algorithmic}[1]
		\State \textbf{Input}:  $K$, $T$, $N$, $\eta_1$, $\eta_2$, $\balpha_0 \in \cSa$ and $\btheta_0 \in \cSt$.
		
% 		\small
% 		\vspace{-.5cm}

% 		\begin{align*}
% 	    \rho_{\balpha}(\bx,\by) = \arg\max\limits_{\balpha \in \cSa} & \Big[\langle \nabla_{\balpha}
% 		h(\bx,\by)-\lambda (\bx-\hat{\balpha}), \balpha -\by\rangle  \\
% 		& -\frac{(L_{22} + \lambda) }{2}\|\balpha - y\|^2 - p(\balpha)\Big]\\
% 		\rho_{\btheta}(\bx,\by) = \arg\min\limits_{\btheta \in \cSt}&  \langle \nabla_{\btheta}h(\bx, \by), \btheta - \bx\rangle  + \frac{L}{2} \|\btheta - \bx\|^2 + q(\btheta).
% 		\end{align*}
% 		\vspace{-.5cm}
% 		\normalsize
		\For  {$t=0, \cdots, T-1$} %\label{alg1:outer-loop} 
		\vspace{.1cm}
	    		\For {$k=0, \cdots, \lfloor{K/N}\rfloor$}% \label{alg1:inner-loop} %\footnotesize{\contour{black}{$\longleftarrow$} Restarting point}%  
	    		 \tikzmark{top}
	    		 \State Set $\beta_1 = 1$ and $\bx_0 = \balpha_t$
        		     \If {$ k = 0$}
                        \State $\by_1 = \bx_0$
                        \Else
                        \State $\by_1 = \bx_N $
                    \EndIf

		            \vspace{.1cm}
        			\For{$j= 1, 2, \ldots, N$} 
		\State Set $\bx_{j} = \rho_{\balpha}(\btheta_{t},\by_j,\eta_1)$ 
		\State Set $\beta_{j+1} =\dfrac{1 + \sqrt{1+4\beta_j^2}}{2}$
		\State $\by_{j+1}  = \bx_j + \Big(\dfrac{\beta_j -1}{\beta_{j+1}} \Big)(\bx_j - \bx_{j-1})$ \tikzmark{right}
        	    %\label{alg1:update-alpha}
        	
    	    %\label{alg1:update-alpha}
    		\EndFor 
		
		\EndFor \tikzmark{bottom}
		\State  $\balpha_{t+1} = \bx_N$
		\State  $\btheta_{t+1} = \rho_{\btheta}(\btheta_{t},\balpha_{t+1},\eta_2)$\label{alg1:update-theta}
		\EndFor
	\end{algorithmic}
	\AddNote{top}{bottom}{right}{{\small  Accelerated 
	Proximal
	Gradient  Ascent
	\cite{nesterov1998introductory, beck2009fast}}}
\end{algorithm}
\end{minipage}
\end{center}

\vspace{0.2cm}

\begin{restatable}{cor}{Corollary}
Based on Theorem~\ref{alg: alg_grad},  to find an $\epsilon$-FNE of the game~\eqref{eq: game1-cons}, Algorithm~\ref{alg: alg_grad} requires ${\cal O}(\epsilon^{-2}\log(\epsilon^{-1}))$  gradients evaluations of the objective function. % with respect to $\btheta$ and $\balpha$ respectively. 
%This results in the overall complexity of the algorithm to be ${\cal O}(\epsilon^{-2}\log(\epsilon^{-1}))$. % if we assume that the two oracles have the same complexity. %As discussed at the beginning of this section, this rate is optimal up to logarithmic factors.
\end{restatable}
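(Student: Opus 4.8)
The plan is to combine the iteration counts established in Theorem~\ref{thm:main} with a simple accounting of the number of gradient evaluations performed in each iteration of Algorithm~\ref{alg: alg_grad}. The structure of the argument is: (i) count the gradient evaluations in one outer iteration $t$; (ii) multiply by the number of outer iterations $T$; (iii) substitute the rates $T = {\cal O}(\epsilon^{-2})$ and $K = {\cal O}(\log(\epsilon^{-1}))$ from Theorem~\ref{thm:main}.

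First I would inspect the inner loop. Within a single outer iteration $t$, the ``Accelerated Proximal Gradient Ascent'' block runs the index $k$ from $0$ to $\lfloor K/N \rfloor$, and for each $k$ it runs the index $j$ from $1$ to $N$, performing one evaluation of $\rho_{\balpha}(\btheta_t, \by_j, \eta_1)$ per inner step. Each such proximal-gradient operator requires exactly one evaluation of $\nabla_{\balpha} h$. Hence the total number of $\nabla_{\balpha} h$ evaluations in outer iteration $t$ is $N \cdot (\lfloor K/N\rfloor + 1) = {\cal O}(K)$ (the additive $N$ term is absorbed since $N$ is a fixed constant depending only on $L_{22}/\sigma$, not on $\epsilon$). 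In addition, the update $\btheta_{t+1} = \rho_{\btheta}(\btheta_t, \balpha_{t+1}, \eta_2)$ on line~\ref{alg1:update-theta} requires one evaluation of $\nabla_{\btheta} h$. So each outer iteration costs ${\cal O}(K) + {\cal O}(1) = {\cal O}(K)$ gradient evaluations.

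Next I would multiply over the $T$ outer iterations: the algorithm performs ${\cal O}(TK)$ gradient evaluations in total. Substituting $T = {\cal O}(\epsilon^{-2})$ and $K = {\cal O}(\log(\epsilon^{-1}))$ from Theorem~\ref{thm:main} yields ${\cal O}(\epsilon^{-2}\log(\epsilon^{-1}))$, and by that same theorem at least one of the iterates $(\btheta_t, \balpha_{t+1})$ produced within this budget is an $\epsilon$-FNE. This completes the count.

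There is essentially no real obstacle here; the corollary is a bookkeeping consequence of Theorem~\ref{thm:main}. The only point that requires a moment of care is confirming that $N = \sqrt{8L_{22}/\sigma}-1$ is a constant independent of $\epsilon$ (so that the $\lfloor K/N\rfloor$ inner iterations genuinely amount to ${\cal O}(K)$ operator calls and the rounding does not change the order), and that each call to $\rho_{\balpha}$ and $\rho_{\btheta}$ uses only a single gradient of $h$ — both of which are immediate from the definitions of the operators given just before the theorem.
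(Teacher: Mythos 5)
Your accounting is correct and is exactly the intended argument: the corollary follows by multiplying the $T={\cal O}(\epsilon^{-2})$ outer iterations of Theorem~\ref{thm:main} by the ${\cal O}(K)={\cal O}(\log(\epsilon^{-1}))$ gradient evaluations per outer iteration (with $N$ a constant independent of $\epsilon$ and one extra $\nabla_{\btheta}h$ evaluation per iteration). This matches the paper's (implicit) bookkeeping proof, so nothing further is needed.
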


\section{Non-Convex Concave Games}
In this section, we consider the min-max problem~$\eqref{eq: game1-cons}$ under the assumption that $h(\btheta,\balpha)$ is concave (but not strongly concave) in $\balpha$ for any given value of~$\btheta$. In  this case, the direct extension of Algorithm~\ref{alg: alg_grad} will not work since the function $g(\btheta)$ might be non-differentiable. To overcome this issue, we start by making the function~$f(\btheta,\balpha)$ strongly concave by adding a ``negligible" regularization. More specifically, we define 
\begin{align}
    f_{\lambda}(\btheta,\balpha) = f(\btheta,\balpha) -\frac{\lambda}{2} \|\balpha - \hat{\balpha}\|^2, 
\end{align}
for some $\hat{\balpha} \in \cSa$.  
We then apply Algorithm~\ref{alg: alg_grad} to the modified non-convex-strongly-concave game
\begin{equation}\label{eq:game_reg}
\min_{\btheta\in\cSt}\max_{\balpha \in \cSa} \;f_{\lambda}(\btheta,\balpha).
\end{equation}
It can be shown that by choosing $\lambda = \frac{\epsilon}{2\sqrt{2}R}$, when we apply Algorithm~\ref{alg: alg_grad} to the modified game~\eqref{eq:game_reg}, we obtain an $\epsilon$-FNE of the original problem~\eqref{eq: game1-cons}. 
% This approximation makes the regularized function~$f_{\lambda}(\btheta,\cdot)$ to be $\lambda$-strongly concave for any given $\btheta$. Thus, it enables us to re-use the proposed multi-step proximal method in the previous section. 
% Furthermore, we  use accelerated proximal gradient when updating the parameter $\balpha$. Details of the accelerated method can be found in Algorithm~\ref{alg: alg_grad}. 
More specifically, with a proper choice of parameters, the following theorem establishes that the proposed method  converges to $\epsilon$-FNE point of the original problem.

\begin{restatable}{thm}{Thmmm}\label{thm: FW} [Informal Statement]
% Consider the min-max zero sum game
% \begin{align}\label{eq:thmconcvae}\nonumber
% \min_{\btheta \in \cSt}\; \max_{\balpha \in \cSa} \;\;(f(\btheta,\balpha) = h(\btheta,\balpha) -p(\balpha) + q(\btheta)),
% \end{align} 
% where the function $h(\btheta,\balpha)$ is concave. Define $f_{\lambda}(\btheta,\balpha) = f(\btheta,\balpha) -\frac{\lambda}{2} \|\balpha - \hat{\balpha}\|^2$ for some $\hat{\balpha} \in \cSa$. Set
Set $\eta_1 = 1/(L_{22} + \lambda)$, $\eta_2 = 1/(L_{11} + L_{12}^2/\lambda)$, $\lambda = \dfrac{\epsilon}{2R}$, $N = \sqrt{8L_{22}/\lambda}-1$, and apply Algorithm~\ref{alg: alg_grad} to the regularized min-max problem~\eqref{eq:game_reg}. Choose $K,T$ large enough such that
$
T \geq N_T(\epsilon) \triangleq {\cal O} (\epsilon^{-3}), \;  {\rm and} \;	K \geq N_K(\epsilon) \triangleq {\cal O}\big(\epsilon^{-1/2} \log(\epsilon^{-1})\big).
$
Then, there exists $t \in \{0, \ldots, T-1\}$ in Algorithm~\ref{alg: alg_grad} such that $(\btheta_t, \balpha_{t+1})$ is an $\epsilon$-FNE of the original problem~\eqref{eq: game1-cons}. % for any given scalar $\epsilon \in (0,1)$. 
\end{restatable}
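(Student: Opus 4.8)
The plan is to reduce Theorem~\ref{thm: FW} to Theorem~\ref{thm:main} applied to the regularized game~\eqref{eq:game_reg}, and then control the discrepancy introduced by the regularization term $\frac{\lambda}{2}\|\balpha - \hat{\balpha}\|^2$. First I would observe that $f_\lambda$ is $\lambda$-strongly concave in $\balpha$ for every fixed $\btheta$, and that its $h$-part $h_\lambda(\btheta,\balpha) \triangleq h(\btheta,\balpha) - \frac{\lambda}{2}\|\balpha-\hat\balpha\|^2$ still satisfies Assumption~\ref{assumption: LipSmooth-uncons} with the same $L_{11}$ and $L_{12}$, and with $L_{22}$ replaced by $L_{22}+\lambda$; the $p$-part is unchanged and the $q$-part is unchanged. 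Hence Theorem~\ref{thm:main} (with $\sigma$ replaced by $\lambda$) applies verbatim: with $\eta_1 = 1/(L_{22}+\lambda)$, $\eta_2 = 1/(L_{11}+L_{12}^2/\lambda)$, $N = \sqrt{8L_{22}/\lambda}-1$, and $T = {\cal O}(\delta^{-2})$, $K = {\cal O}(\log(\delta^{-1}))$ where $\delta$ is the accuracy for the regularized game, Algorithm~\ref{alg: alg_grad} produces some iterate $(\btheta_t,\balpha_{t+1})$ that is a $\delta$-FNE of~\eqref{eq:game_reg}, i.e., $\cX_\lambda(\btheta_t,\balpha_{t+1}) \le \delta^2$ and $\cY_\lambda(\btheta_t,\balpha_{t+1}) \le \delta^2$, where $\cX_\lambda,\cY_\lambda$ are the quantities from Definition~\ref{def:cons-approx-stationarity} computed with $h_\lambda$, $L_{22}+\lambda$ in place of $h$, $L_{22}$.

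Next I would translate a $\delta$-FNE of the regularized game into an $\epsilon$-FNE of the original game. The $\btheta$-side is essentially free: $\nabla_\btheta h_\lambda = \nabla_\btheta h$ and $q$ is unchanged, so $\cX(\btheta_t,\balpha_{t+1}) = \cX_\lambda(\btheta_t,\balpha_{t+1}) \le \delta^2$, and it suffices to take $\delta \le \epsilon$ there. The $\balpha$-side requires comparing
\[
\cY(\bar\btheta,\bar\balpha) = 2L_{22}\max_{\balpha\in\cSa}\Big[\langle\nabla_\balpha h(\bar\btheta,\bar\balpha),\balpha-\bar\balpha\rangle - p(\balpha)+p(\bar\balpha) - \tfrac{L_{22}}{2}\|\balpha-\bar\balpha\|^2\Big]
\]
with its $\lambda$-counterpart. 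Using $\nabla_\balpha h_\lambda(\bar\btheta,\bar\balpha) = \nabla_\balpha h(\bar\btheta,\bar\balpha) - \lambda(\bar\balpha-\hat\balpha)$ and $\|\balpha-\bar\balpha\| \le 2R$ together with $\|\bar\balpha-\hat\balpha\|\le 2R$ on $\cSa$ (Assumption~\ref{assumption:Concavity}), the extra cross term $-\lambda\langle\bar\balpha-\hat\balpha,\balpha-\bar\balpha\rangle$ is bounded in absolute value by $\lambda(2R)(2R) = 4\lambda R^2$, and the difference in the quadratic penalty constants ($L_{22}$ vs.\ $L_{22}+\lambda$) contributes an additional $O(\lambda R^2)$; combining, $\cY(\bar\btheta,\bar\balpha) \le \cY_\lambda(\bar\btheta,\bar\balpha) + cL_{22}\lambda R^2$ for an absolute constant $c$. (The exact constant should be recomputed carefully; a cleaner route is to bound $f(\btheta,\balpha) - f_\lambda(\btheta,\balpha) \in [0, 2\lambda R^2]$ uniformly and argue directly at the level of the max over $\balpha$.) Setting $\lambda = \epsilon/(2R)$ as prescribed makes this additive error $O(\epsilon)$, and then choosing $\delta = \Theta(\epsilon)$ gives $\cY \le \epsilon^2$ after adjusting constants.

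Finally I would assemble the iteration complexity. With $\lambda = \epsilon/(2R) = \Theta(\epsilon)$ and $\delta = \Theta(\epsilon)$, Theorem~\ref{thm:main}'s bounds become $T = {\cal O}(\delta^{-2}) = {\cal O}(\epsilon^{-2})$ for the outer loop on the regularized problem \emph{measured in the regularized accuracy}; but because the achievable regularized accuracy itself scales like $\lambda \sim \epsilon$ — i.e., one cannot drive $\cY_\lambda$ below roughly the regularization bias unless $T$ grows — the relevant target is $\delta^2 = \Theta(\epsilon^2)$ while the per-step progress on $g_\lambda(\btheta)+q(\btheta)$ is governed by $L_{g_\lambda} = L_{11} + L_{12}^2/\lambda = \Theta(1/\epsilon)$, inflating $T$ by a factor $1/\epsilon$ to ${\cal O}(\epsilon^{-3})$; likewise $N = \sqrt{8L_{22}/\lambda}-1 = \Theta(\epsilon^{-1/2})$ and the inner accelerated loop needs $K = {\cal O}(N\log(\epsilon^{-1})) = {\cal O}(\epsilon^{-1/2}\log(\epsilon^{-1}))$ as claimed. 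The main obstacle I anticipate is the bookkeeping in the second paragraph: one must show that a $\delta$-FNE of $f_\lambda$, with $\delta$ only of order $\epsilon$, still certifies $\cX,\cY \le \epsilon^2$ for $f$ — this needs the $\max_\balpha$ / $\min_\btheta$ in Definition~\ref{def:cons-approx-stationarity} to be perturbation-stable under the $O(\lambda R^2)$ change in the inner objective, and getting the constants to line up with the stated $\lambda = \epsilon/(2R)$ (versus the $\epsilon/(2\sqrt2 R)$ mentioned in the text) is the delicate part; the smoothness-constant tracking through Lemma~\ref{lm:gg-smooth} and Theorem~\ref{thm:main} is routine by comparison.
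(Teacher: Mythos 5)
Your overall route is the same as the paper's: apply Theorem~\ref{thm:main} to the regularized game~\eqref{eq:game_reg} with $\sigma=\lambda$, note that the $\btheta$-side measure is unchanged since $\nabla_{\btheta}h_{\lambda}=\nabla_{\btheta}h$, and translate the $\balpha$-side measure back to the original game. The gap is in that last translation. You bound the cross term $-\lambda\langle \bar{\balpha}-\hat{\balpha},\balpha-\bar{\balpha}\rangle$ by its absolute value over the feasible set, arriving at $\cY \le \cY_{\lambda} + c\,L_{22}\lambda R^{2}$, and then claim that $\lambda=\epsilon/(2R)$ together with $\delta=\Theta(\epsilon)$ gives $\cY\le\epsilon^{2}$ ``after adjusting constants.'' That does not follow: the bias you have proved is of order $\lambda R^{2}=\Theta(\epsilon R)$, i.e.\ \emph{linear} in $\epsilon$, while the target is $\epsilon^{2}$; no constant adjustment closes that factor of $1/\epsilon$. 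If you insist on the linear-in-$\lambda$ bound you are forced to take $\lambda=O(\epsilon^{2})$, which inflates $N=\sqrt{8L_{22}/\lambda}-1$ to $\Theta(\epsilon^{-1})$, $K$ to $\Theta(\epsilon^{-1}\log(\epsilon^{-1}))$, and $T=O(L_{g_{\lambda}}D/\epsilon^{2})$ with $L_{g_{\lambda}}=L_{11}+L_{12}^{2}/\lambda$ to $O(\epsilon^{-4})$ --- strictly worse than the rates you are trying to prove. (Your alternative suggestion, a uniform bound $0\le f-f_{\lambda}\le 2\lambda R^{2}$, has the same defect: it is again only first order in $\lambda$.)

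The missing idea is that the perturbation must be absorbed against the quadratic penalty inside the max, not bounded crudely. The paper first invokes \cite[Lemma 1]{karimi2016linear} to pass from penalty constant $L_{22}$ to $2L_{22}+\lambda$ (costing only a constant factor, which is why its bound reads $\cY_{0}\le 2\cY_{\lambda}+\text{bias}$ rather than $\cY_{0}\le\cY_{\lambda}+\text{bias}$), and then splits $\tfrac{2L_{22}+\lambda}{2}\|\balpha-\balpha_{t+1}\|^{2}$ into $\tfrac{L_{22}+\lambda}{2}\|\cdot\|^{2}+\tfrac{L_{22}}{2}\|\cdot\|^{2}$: the first piece, paired with $\nabla_{\balpha}h-\lambda(\balpha_{t+1}-\hat{\balpha})$, reconstructs $\cY_{\lambda}$, while the leftover
\begin{align*}
\max_{\balpha\in\cSa}\Big[\lambda\langle \balpha_{t+1}-\hat{\balpha},\balpha-\balpha_{t+1}\rangle-\tfrac{L_{22}}{2}\|\balpha-\balpha_{t+1}\|^{2}\Big]\le \frac{\lambda^{2}\|\balpha_{t+1}-\hat{\balpha}\|^{2}}{2L_{22}}=O(\lambda^{2}R^{2})
\end{align*}
is \emph{quadratic} in $\lambda$. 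With the bias of order $\lambda^{2}R^{2}$, the choice $\lambda=\Theta(\epsilon/R)$ (the paper uses $\lambda=\min\{L_{22},\epsilon/(2\sqrt{2}R)\}$, which is where the $\sqrt{2}$ you flagged comes from) makes the bias $O(\epsilon^{2})$, so running the regularized problem to an $(\epsilon/2)$-FNE suffices, and the stated $T=O(\epsilon^{-3})$, $K=O(\epsilon^{-1/2}\log(\epsilon^{-1}))$ then follow exactly as in your final paragraph from $L_{g_{\lambda}}=\Theta(1/\epsilon)$ and $N=\Theta(\epsilon^{-1/2})$. Your complexity accounting and the $\cX$-side argument are correct; the $\balpha$-side comparison needs this complete-the-square step to make the theorem true with the prescribed $\lambda$.
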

\begin{restatable}{cor}{Corollaryy}
Based on Theorem~\ref{thm: FW},  Algorithm~\ref{alg: alg_grad} requires ${\cal O}(\epsilon^{-3.5}\log(\epsilon^{-1}))$ gradient evaluations in order to find a $\epsilon$-FNE of the game~\eqref{eq: game1-cons}. 
% it needs to evaluate ${\cal O}(\epsilon^{-3})$ and  ${\cal O}(\epsilon^{-3.5}\log(\epsilon^{-1}))$  gradients of the objective function with respect to $\btheta$ and $\balpha$ respectively. This results in the 
%it . %As discussed at the beginning of this section, this rate is optimal up to logarithmic factors.
\end{restatable}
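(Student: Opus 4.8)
The plan is to prove this as a direct accounting of the number of gradient evaluations performed by Algorithm~\ref{alg: alg_grad} when it is run on the regularized game~\eqref{eq:game_reg} with the parameters prescribed in Theorem~\ref{thm: FW}, and then to substitute the orders of $T$, $K$, and $N$ (which in turn depend on $\lambda=\epsilon/(2R)$) coming from that theorem. First I would fix the unit of cost: one ``gradient evaluation'' means one evaluation of $\nabla_{\balpha} h(\cdot,\cdot)$ or of $\nabla_{\btheta} h(\cdot,\cdot)$; evaluations of the prox operators $\rho_{\balpha},\rho_{\btheta}$ and of the convex pieces $p,q$ are not counted, consistent with the convention used in the corollary following Theorem~\ref{thm:main}.

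Next I would inspect the loops of Algorithm~\ref{alg: alg_grad}. In the inner-most loop (the accelerated proximal gradient ascent, the $j=1,\dots,N$ iteration), each update $\bx_j=\rho_{\balpha}(\btheta_t,\by_j,\eta_1)$ requires exactly one evaluation of $\nabla_{\balpha} h(\btheta_t,\by_j)$, while the Nesterov extrapolation step producing $\by_{j+1}$ uses none; so one pass of the $j$-loop costs $N$ gradient evaluations. The $k$-loop runs $\lfloor K/N\rfloor+1$ times, so one outer ($t$) iteration spends at most $N\big(\lfloor K/N\rfloor+1\big)\le K+N$ gradient evaluations inside the ascent block, plus the single evaluation of $\nabla_{\btheta} h(\btheta_t,\balpha_{t+1})$ needed for the update $\btheta_{t+1}=\rho_{\btheta}(\btheta_t,\balpha_{t+1},\eta_2)$ on line~\ref{alg1:update-theta}. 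Hence one outer iteration costs at most $K+N+1$ gradient evaluations, and the full run costs at most $T\,(K+N+1)$.

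Then I would plug in the orders from Theorem~\ref{thm: FW}: $T={\cal O}(\epsilon^{-3})$, $K={\cal O}(\epsilon^{-1/2}\log(\epsilon^{-1}))$, and since $\lambda=\epsilon/(2R)$ we have $N=\sqrt{8L_{22}/\lambda}-1={\cal O}(\epsilon^{-1/2})$; therefore $K+N+1={\cal O}(\epsilon^{-1/2}\log(\epsilon^{-1}))$. Multiplying gives $T\,(K+N+1)={\cal O}\big(\epsilon^{-3}\cdot\epsilon^{-1/2}\log(\epsilon^{-1})\big)={\cal O}(\epsilon^{-3.5}\log(\epsilon^{-1}))$, and by Theorem~\ref{thm: FW} one of the iterates $(\btheta_t,\balpha_{t+1})$ generated within these $T$ outer iterations is an $\epsilon$-FNE of~\eqref{eq: game1-cons}, which is exactly the stated claim.

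There is essentially no analytical obstacle here; the only point that requires care is the bookkeeping, in particular remembering that $N$ is \emph{not} a fixed constant but grows like $\lambda^{-1/2}=\Theta(\epsilon^{-1/2})$ because the regularization parameter shrinks with $\epsilon$, and then checking that this term is dominated by $K$ (up to the logarithmic factor) so that the product collapses cleanly to ${\cal O}(\epsilon^{-3.5}\log(\epsilon^{-1}))$ rather than something larger.
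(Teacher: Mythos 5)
Your accounting is correct and is exactly the (implicit) argument behind the corollary in the paper: each outer iteration costs at most $K+N+1$ gradient evaluations, and with $T={\cal O}(\epsilon^{-3})$, $K={\cal O}(\epsilon^{-1/2}\log(\epsilon^{-1}))$, and $N=\Theta(\lambda^{-1/2})=\Theta(\epsilon^{-1/2})$ from Theorem~\ref{thm: FW}, the total is ${\cal O}(\epsilon^{-3.5}\log(\epsilon^{-1}))$. Your explicit observation that $N$ grows like $\epsilon^{-1/2}$ (rather than being a constant) and is absorbed by $K$ is the only subtle point, and you handle it correctly.
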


% \noindent\textbf{Proof Sketch for Theorem~\ref{thm: FW}}
% %\footnote{Details of the proofs are omitted due to space limitation and will be available in the arXiv version of the paper} 
% We only need to show that when the regularized function converges to  $\epsilon$-FNE point, by proper choice of $\lambda$, the converged point is also an $\epsilon$-FNE point of the original game. To show this, it only satisfies to bound the set of $\epsilon$-first-order Nash equilibrium of original function, $\cY_0(\bts)$ by the regularized $\cY_{\lambda}(\bts)$. %Since the other condition, $\cX(\btheta^*, \balpha^*)$ is the same for both original and regularized function. 
% Notice that using the definition of $\cY_0(\bts)$ and \cite[Lemma 2]{karimi2016linear}, we can show
% \vspace{-.cm}
% \[
% \cY_0(\bts)  \leq 4 \left(\frac{1}{2}\cY_{\lambda}(\bts) + \frac{\lambda^2}{2} \|\balpha^*(\btheta_t) - \hat{\balpha}\| \right) 
% \]
% As a result we have, 
% \vspace{-.3cm}
% \begin{align*}
% \cY_0(\bts) &  \leq 4 \left(\frac{1}{2}\cY_{\lambda}(\bts) + \frac{\lambda^2}{2} \|\balpha^*(\btheta_t) - \hat{\balpha}\| \right) 
% \\ & \leq 2 \cY_{\lambda}(\bts) + 2 \lambda^2 R^2
% \leq \epsilon^2
% \end{align*}
% \vspace{-.5cm}
 
% where the last inequality comes from choosing $\lambda = \frac{\epsilon}{2R}$ and $\frac{\epsilon}{2}$-stationary {\color{red} ? is the word proper?}convergence of regularized function. 
 \vspace{-.cm}
 
\section{Numerical Experiments}
In this section, we evaluate the performance of the proposed algorithm for the problem of attacking the LASSO estimator. In other words, our goal is to find a small perturbation of the observation matrix that  worsens the performance of the LASSO estimator in the training set. %~\cite{xu2009robust}. 
This attack problem can be formulated as
\begin{align}\label{eq:experiment}
    \max\limits_{\textbf{A} \in \mathcal{B}(\hat{\textbf{A}}, \Delta)} \min\limits_{\bx} \|\textbf{A}\bx - \textbf{b}\|_2^2 + \xi \|\bx\|_1,
\end{align}

where $\mathcal{B}(\hat{\textbf{A}}, \Delta) = \{\textbf{A}\;|\;||\textbf{A} - \hat{\textbf{A}}||^2_{F} \leq \Delta\}$ and the matrix $\textbf{A} \in \mathbb{R}^{m \times n}$. We set $m = 100$, $n = 500$, $\xi = 1$ and $\Delta = 10^{-1}$. In our experiments, first we generate a ``ground-truth" vector~$\bx^*$ with sparsity level $s = 25$ in which the location of the non-zero elements are chosen randomly and their values are sampled from a standard Gaussian distribution.
Then, we generate the elements of matrix $\mathbf{A}$ using standard Gaussian distribution. Finally, we set $\mathbf{b} = \mathbf{A} \mathbf{x}^* + \mathbf{e}$, where $\mathbf{e} \sim N(\mathbf{0}, 0.001\mathbf{I})$. We compare the performance of the proposed algorithm with the popular  subgradient descent-ascent and  proximal gradient descent-ascent algorithms. In the subgradient descent-ascent algorithm, at each iteration, we take one step of sub-gradient ascent step with respect to~$\bx$ followed by one steps of sub-gradient ascent in $\mathbf{A}$. Similarly,  each iteration of the proximal gradient descent-ascent algorithm consists of one step of proximal gradient descent with respect to~$\bx$ and one step of proximal gradient descent with respect to~$\mathbf{A}$.

\vspace{0.2cm}

To have a fair comparison, all of the studied algorithms have been  initialized at the same random points in  Fig.~\ref{fig:plot}. %The step-size for gradient ascent is set to 1e-2 and $\zeta = 1e-4$. 
%We run all algorithms using $T = 50$ iterations. 

\begin{figure}[H]
\centering
\includegraphics[width=.5\textwidth]{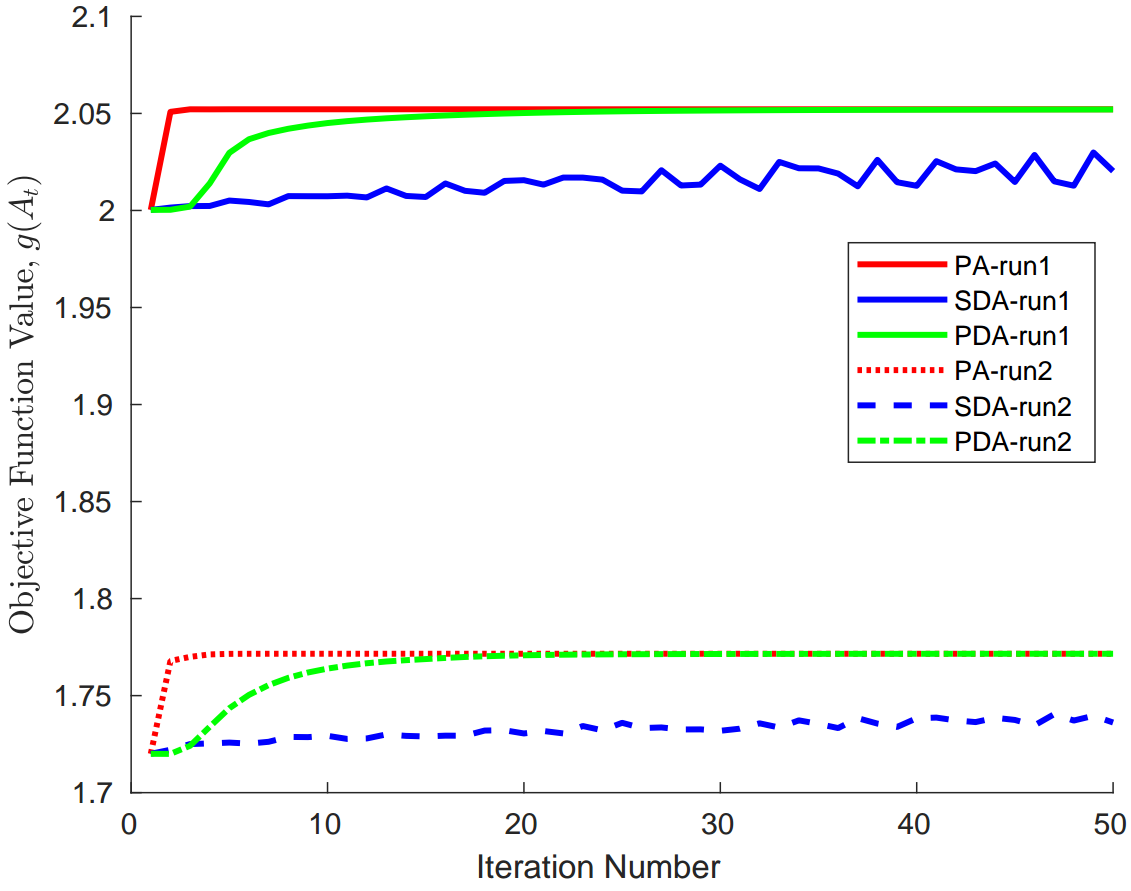}\includegraphics[width=.5\textwidth]{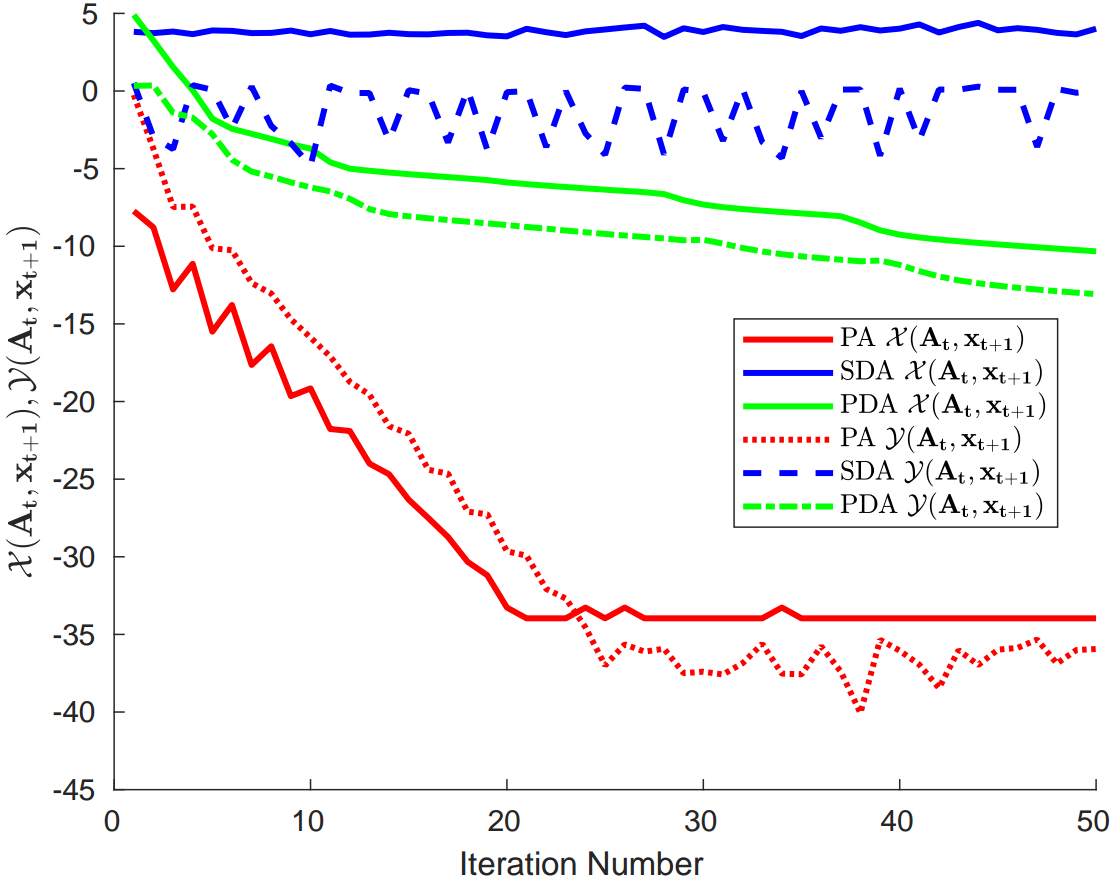}\quad
\caption{ \small (left): Convergence behavior of different algorithms in terms of the objective value. The objective value at iteration $t$ is defined as $g(\mathbf{A}_t)\triangleq \min_{\bx} \|\mathbf{A}_t \bx  - \mathbf{b}\|_2^2 + \xi \|\bx\|_1$, (right): Convergence behavior of different algorithms in terms of the stationarity measures~$\cX(\mathbf{A}_{t},\bx_{t+1})$, $\cY(\mathbf{A}_{t},\bx_{t+1})$ (logarithmic scale). The list of the algorithms used in the comparison is as follows: Proposed Algorithm (PA), Subgradient Descent-Ascent (SDA), and Proximal  Descent-Ascent algorithm (PDA).}
\label{fig:plot}
\end{figure}

% \begin{figure}[H]
% \begin{minipage}[b]{1.0\linewidth}
%   \centering
%   \centerline{\includegraphics[width=7.5cm, height = 5.8cm]{1.png}}
%   \vspace{-.3cm}
% %   \centerline{(a) Simulation Result}\medskip
% \end{minipage}
% %\vspace{-0.4cm}
% \caption{\small Convergence behavior of different algorithms in terms of the objective value. The objective value at iteration $t$ is defined as $g(\mathbf{A}_t)\triangleq \min_{\bx} \|\mathbf{A}_t \bx  - \mathbf{b}\|_2^2 + \xi \|\bx\|_1$. The list of the algorithms used in the comparison is as follows: Proposed Algorithm (PA), Subgradient Descent-Ascent (SDA), and Proximal  Descent-Ascent algorithm (PDA).}
% \label{fig:func_value}
% %
% \end{figure}

% \vspace{.2cm}

% \begin{figure}[H]
% \begin{minipage}[b]{1.0\linewidth}
%   \centering
%   \centerline{\includegraphics[width=7.5cm, height = 5.8 cm]{2.png}}
% %  \vspace{-.3cm}
% %   \centerline{(a) Simulation Result}\medskip
% \end{minipage}
% \caption{\small Convergence behavior of different algorithms in terms of the stationarity measures~$\cX(\mathbf{A}_{t},\bx_{t+1})$, $\cY(\mathbf{A}_{t},\bx_{t+1})$. The list of the algorithms used in the comparison is as follows: Proposed Algorithm (PA), Subgradient Descent-Ascent (SDA), and Proximal  Descent-Ascent algorithm (PDA).}
% \label{fig:xi_yi}
% \end{figure}

The above figure might not be a fair comparison since each step of the proposed algorithm is computationally more expensive than the two benchmark methods. To have a better comparison, we evaluate the performance of the algorithms in terms of the required time for convergence. Table~\ref{table:1} summarizes the average time required for different algorithms for finding a point $(\bar{\mathbf{A}},\bar{\bx})$ satisfying $\cX(\bar{\mathbf{A}},\bar{\bx})\leq 0.1$ and $\cY(\bar{\mathbf{A}},\bar{\bx})\leq 0.1$. The average is taken over 100 different experiments. As can be seen in the table, the proposed method in average converges an order of magnitude faster than the other two algorithms. %Plots in Fig.~\ref{fig:func_value} show the value of the objective function, $g(A) = \min\limits_{\bx} \|\textbf{A}\bx - \textbf{b}\|_2^2 + \xi \|\bx\|_1$, for two of those experiments. As seen from the plots, subgradient descent-ascent algorithm does not converge to $\epsilon$-FNE. 
%Fig.~\ref{fig:xi_yi} shows the convergence behavior of different algorithm to $\epsilon$-FNE. As seen form the plot, the proposed algorithm quickly converge to the $\epsilon$-FNE where $\cX({\btheta_t}, {\balpha_t}) \leq \epsilon^2$ and $\cY({\btheta_t}, {\balpha_t})\leq \epsilon^2$ in which $\epsilon \sim 10^{-8}$.

%\vspace{-0.3cm}
\begin{table}[H]
\begin{center}
\begin{tabular}{ |c|c|c|c| } 
 \hline
 Algorithm & PA & SDA & PDA \\ 
 \hline
 Average time (seconds) & 0.0268 & 3.5016 & 0.5603 \\ 
 \hline
 Standard deviation (seconds) & 0.0538 & 7.0137 & 1.1339 \\
 \hline
\end{tabular}
\caption{\footnotesize Average computational time of different algorithms. % for convergence in 100 runs, PA: Proposed Algorithm, SDA: Subgradient Descent-Ascent and PDA: Proximal  Descent-Ascent algorithms
}
\label{table:1}
\end{center}
\end{table}

\vspace{-0.5cm}

\section*{Acknowledgement}
The authors would like to thank Shaddin Dughmi and Dmitrii M. Ostrovskii for their insightful comments that helped to improve the work.   
% \begin{tabularx}{0.4\textwidth} { 
%   | >{\raggedright\arraybackslash\arraybackslash}X 
%   | >{\centering\arraybackslash}X 
%   | >{\raggedleft\arraybackslash}X 
%   | >{\raggedleft\arraybackslash}X |}
%  \hline
%  Algorithm & item 12 & item 13 & item 13 \\
%  \hline
%  Average time  & item 22  & item 23 & item 13  \\
% \hline
%  Standard deviation  & item 22  & item 23 & item 13  \\
% \hline
% \end{tabularx}

% To start a new column (but not a new page) and help balance the last-page
% column length use \vfill\pagebreak.
% -------------------------------------------------------------------------
%\vfill
%\pagebreak

%\vfill\pagebreak

% References should be produced using the bibtex program from suitable
% BiBTeX files (here: strings, refs, manuals). The IEEEbib.bst bibliography
% style file from IEEE produces unsorted bibliography list.
% -------------------------------------------------------------------------
% \clearpage
 %\small
%  \newpage
%  \footnotesize
\bibliographystyle{IEEEbib}

\bibliography{strings}
% \end{document}
 
%%%%%%%%%%%%%%%%%%%%%%%%%%%%%%% I comment the appendix, uncomment to show up 

\normalsize
\section*{Appendix}

\normalsize

\noindent\textbf{Discussions on Remark~\ref{optimilaity_condtion}:}
Consider the optimization problem 
\begin{align}\label{opt}
 \min\limits_{\bz\in \cZ}\;\; F(\bz),   
\end{align}
in which the set $\cZ$ is bounded and convex; and $F(\cdot): \mathbb{R}^n \mapsto \mathbb{R}$ is $\ell$-smooth, i.e.,
\[
\|\nabla F(\bz_1) - \nabla F(\bz_2)\| \leq \ell \| \bz_1- \bz_2\|.
\]
One of the commonly used definitions of  $\epsilon$-stationary point  for the optimization problem~\eqref{opt} is as follows.  
\begin{defi}
[$\epsilon$-stationary point of the first type] A point $\bar{\bz}$ is said to be an $\epsilon$-stationary point of the first type of~\eqref{opt} if
\begin{align}
& \bigg\|\mathcal{P}_{\cZ} \left(\bar{\bz} - \frac{1}{\ell}\nabla F(\bar{\bz})\right) - \bar{\bz}\bigg\| \leq \frac{\epsilon}{\ell} \label{eq:SP1T},
\end{align}
where $\mathcal{P}_{\cZ}(\cdot)$ represents the projection operator to the feasible set $\cZ$.
\end{defi}
\vspace{0.2cm}

 Another  notion of stationarity, which is used in this paper (as well as other works including~\cite{karimi2016linear}), is defined as follows. 

 \vspace{0.2cm}

 \begin{defi}
[$\epsilon$-stationary point of the second type] A point $\bar{\bz}$ is said to be an $\epsilon$-stationary point of the second type for the optimization problem~\eqref{opt} if
\begin{align}
D(\bar{\bz}) \leq \epsilon^2, \label{eq:SP2T}
\end{align}
where $D(\bar{\bz}) \triangleq -2\ell\min\limits_{\bz \in \cZ} \left[\langle \nabla F(\bar{\bz}), \bz-\bar{\bz}\rangle + \frac{\ell}{2} \|\bz-\bar{\bz}\|^2\right]$.
\end{defi}
 
 The following theorem shows that the stationarity definition in~\eqref{eq:SP2T} is strictly stronger than the stationarity definition in~\eqref{eq:SP1T}.

% {\color{red} We have another $l$ in Lemma~1. So I changed $l$ to $\ell$ in above equations, please change for the rest and figures.} 

\begin{thm}
The $\epsilon$-stationary concept of the second type is stronger than the $\epsilon$-stationary concept of the first type. In particular, if a point $\bar{\bz}$ satisfies~\eqref{eq:SP2T}, then it must also satisfy \eqref{eq:SP1T}. Moreover, there exist an optimization problem with a given feasible point $\bar{\bz}$ such that $\bar{\bz}$ is $\epsilon$-stationary point of the first type, but it is not $\epsilon'$-stationary point of the second type for any $\epsilon' < \sqrt{2\epsilon + \epsilon^2}$.
\end{thm}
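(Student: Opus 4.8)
The plan is to prove the two assertions separately. For the first (implication) direction, suppose $\bar{\bz}$ satisfies \eqref{eq:SP2T}, i.e., $D(\bar{\bz}) \le \epsilon^2$. Let $\bz^\star \triangleq \mathcal{P}_{\cZ}\bigl(\bar{\bz} - \tfrac1\ell \nabla F(\bar{\bz})\bigr)$. The key observation is that $\bz^\star$ is exactly the minimizer defining $D(\bar{\bz})$: completing the square shows
\[
\argmin_{\bz \in \cZ}\Bigl[\langle \nabla F(\bar{\bz}), \bz - \bar{\bz}\rangle + \tfrac{\ell}{2}\|\bz - \bar{\bz}\|^2\Bigr]
= \argmin_{\bz \in \cZ}\Bigl\|\bz - \bigl(\bar{\bz} - \tfrac1\ell \nabla F(\bar{\bz})\bigr)\Bigr\|^2 = \bz^\star .
\]
So I would first expand $\langle \nabla F(\bar{\bz}), \bz^\star - \bar{\bz}\rangle + \tfrac{\ell}{2}\|\bz^\star - \bar{\bz}\|^2$ and, using the variational inequality characterizing the projection $\bz^\star$ (namely $\langle \bar{\bz} - \tfrac1\ell\nabla F(\bar{\bz}) - \bz^\star,\, \bz - \bz^\star\rangle \le 0$ for all $\bz \in \cZ$, applied at $\bz = \bar{\bz}$), I would bound that quantity from above by $-\tfrac{\ell}{2}\|\bz^\star - \bar{\bz}\|^2$. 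This gives $D(\bar{\bz}) \ge \ell^2 \|\bz^\star - \bar{\bz}\|^2$, hence $\|\bz^\star - \bar{\bz}\| \le \sqrt{D(\bar{\bz})}/\ell \le \epsilon/\ell$, which is precisely \eqref{eq:SP1T}.

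For the second (strictness/separation) assertion, the plan is to exhibit a one-dimensional instance. Take $\cZ = [0, c]$ for a suitable $c>0$, $F(z) = z$ (so $\ell$ can be taken as any positive constant, say $\ell = 1$), and the candidate point $\bar{\bz} = 0$. Then $\nabla F(\bar{\bz}) = 1$, the projected-gradient point is $\mathcal{P}_{[0,c]}(0 - 1) = 0 = \bar{\bz}$, so the left side of \eqref{eq:SP1T} is $0 \le \epsilon$ for every $\epsilon \ge 0$ — the point is trivially $\epsilon$-stationary of the first type. On the other hand, $D(\bar{\bz}) = -2\min_{z\in[0,c]}\bigl[z + \tfrac12 z^2\bigr]$; since the bracket is increasing on $[0,c]$ its minimum is at $z=0$, giving $D(\bar{\bz}) = 0$ in this exact form. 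To make $D(\bar{\bz})$ strictly positive I instead place $\bar{\bz}$ at the right endpoint: take $\cZ = [0,c]$, $F(z) = -z$, $\bar{\bz} = c$; then the gradient step $\mathcal{P}_{[0,c]}(c + 1) = c = \bar{\bz}$ again makes \eqref{eq:SP1T} hold with value $0$, while $D(\bar{\bz}) = -2\min_{z\in[0,c]}[-(z-c) + \tfrac12(z-c)^2]$, whose minimizing $z$ is $0$ (for $c \le 1$), yielding $D(\bar{\bz}) = 2c - c^2 = 2c + c^2$-type expressions; choosing $c$ appropriately in terms of $\epsilon$ produces $D(\bar{\bz}) = 2\epsilon + \epsilon^2 > (\epsilon')^2$ for all $\epsilon' < \sqrt{2\epsilon + \epsilon^2}$, which is the claimed gap. (One must double-check the algebra: with $c = \epsilon$ the bracketed quantity at $z=0$ is $\epsilon + \tfrac12\epsilon^2$, so $D = 2\epsilon + \epsilon^2$, matching the target exactly.)

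The main obstacle is not any deep inequality — the implication direction is a short projection/completing-the-square argument — but rather engineering the counterexample so that the constants line up to give exactly the threshold $\sqrt{2\epsilon + \epsilon^2}$ stated in the theorem, rather than merely *some* separation. The delicate points are: (i) ensuring the example is genuinely $\ell$-smooth with the same $\ell$ used in both definitions (linear $F$ makes this automatic for any $\ell$, so I would keep $F$ affine); (ii) checking that the constraint set is "active" in the right way so that the projected-gradient residual vanishes while the linearized-model decrease $D$ does not; and (iii) verifying the minimizer in the definition of $D$ is the endpoint I claim, which requires $c$ (equivalently $\epsilon$) to be small enough that the quadratic's unconstrained vertex lies outside $\cZ$. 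I would state the example cleanly with $\cZ=[0,\epsilon]$, an affine $F$, and $\ell=1$, then just compute both quantities directly.
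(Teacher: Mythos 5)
Your first claim (that \eqref{eq:SP2T} implies \eqref{eq:SP1T}) is proved correctly, and your route is essentially the paper's: the projection variational inequality $\langle \bar\bz - \tfrac1\ell\nabla F(\bar\bz) - \bz^\star,\, \bar\bz - \bz^\star\rangle \le 0$ is exactly the obtuse-angle property ($\cos\gamma\le 0$) that the paper feeds into the law of cosines, and both arguments amount to $D(\bar\bz)\ \ge\ \ell^2\,\|\bz^\star-\bar\bz\|^2$.

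The separation example, however, is wrong. With $\cZ=[0,c]$, $F(z)=-z$, $\bar z=c$, $\ell=1$, the bracket in the definition of $D$ is $\phi(z)=-(z-c)+\tfrac12(z-c)^2$, whose derivative $(z-c)-1\le -1<0$ on $[0,c]$; hence $\phi$ is decreasing and its minimum over $[0,c]$ is attained at $z=c$, not at $z=0$ as you assert, so $D(\bar z)=-2\phi(c)=0$. (The value $c+\tfrac12 c^2$ you compute is $\phi(0)$, which is the maximum of $\phi$ on $[0,c]$.) Thus your point is a $0$-stationary point of \emph{both} types and exhibits no gap. The failure is structural, not just an algebra slip: whenever the type-1 residual is exactly zero, i.e.\ $\mathcal{P}_{\cZ}\bigl(\bar\bz-\tfrac1\ell\nabla F(\bar\bz)\bigr)=\bar\bz$, the completing-the-square identity from your own part one shows the minimizer defining $D(\bar\bz)$ is $\bar\bz$ itself, so $D(\bar\bz)=0$; no point fixed by the projected-gradient map can separate the two notions. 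What you need is a point that \emph{does} move, by about $\epsilon/\ell$, under the projected-gradient step while the gradient has magnitude of order one. The paper's example is $\min_z \tfrac12 z^2$ subject to $z\ge 1$ with $\bar z=1+\epsilon$ and $\ell=1$: the projected step lands at $z=1$, giving residual exactly $\epsilon$, while the model's minimum over the feasible set is at $z=1$ with value $-(1+\epsilon)\epsilon+\tfrac12\epsilon^2$, so $D(\bar z)=2\epsilon+\epsilon^2$. Note also that an affine repair in your spirit (e.g.\ $F(z)=-z$ on $[0,c]$ with the interior point $\bar z=c-\epsilon$) only yields $D=2\epsilon-\epsilon^2$, which misses the stated threshold $\sqrt{2\epsilon+\epsilon^2}$; the extra $\epsilon^2$ in the paper's construction comes from the gradient magnitude $1+\epsilon$ of the quadratic at $\bar z$.
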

\begin{proof}
We first show that \eqref{eq:SP2T} implies \eqref{eq:SP1T}, i.e., if $D(\bar{\bz}) \leq \epsilon^2$ then $ \|\mathcal{P}_{\cZ} \left(\bar{\bz} + (1/\ell)\nabla F( \bar{\bz})\right)\| \leq \epsilon/\ell$. 
From definition of $D(\bar{\bz})$, we have 
\begin{align*}
   	D(\bar{\bz}) \triangleq & -2 \ell\min_{\bz \in \cZ}\,\,  \Big[ 	\langle \nabla  F(\bar{\bz}), \bz -\bar{\bz} \rangle+\frac{\ell}{2} ||\bz - \bar{\bz}||^2 \Big]
   	\\ = &  -\ell^2\min_{\bz \in \cZ}\,\,  \Big[ \frac{2}{\ell}\langle \nabla  F( \bar{\bz}), \bz -\bar{\bz} \rangle +||\bz - \bar{\bz}||^2 \Big]
   	\\ = &  -\ell^2\min_{\bz \in \cZ}\,\,  \Big[ \|\bz -\bar{\bz} + \frac{1}{\ell} \nabla  F(\bar{\bz}) \|^2 - \frac{1}{\ell^2} \|\nabla  F(\bar{\bz})\|^2  \Big]
   	   	\\ = & - \ell^2\; \|\mathcal{P}_{\cZ}(\bar{\bz} - \frac{1}{\ell } \nabla  F(\bar{\bz}))- (\bar{\bz} - \frac{1}{\ell} \nabla  F(\bar{\bz})) \|^2+ \|\nabla  F(\bar{\bz})\|^2.
\end{align*}

Defining $\hat{\bz} = \bar{\bz} - \frac{1}{\ell } \nabla  F(\bar{\bz})$, we get
\begin{align}
D(\bar{\bz}) =  - \ell^2\; \|\mathcal{P}_{\cZ}(\hat{\bz})- \hat{\bz} \|^2+ \|\nabla  F(\bar{\bz})\|^2.  \label{eq:Dalpha}
\end{align}

On the other hand, as shown in Fig.~\ref{fig:general},  the direct application of cosine equality implies that
\begin{align}\label{eq:cosine}
\|\frac{1}{\ell}\nabla  F(\bar{\bz})\|^2 = \|\mathcal{P}_{\cZ}(\hat{\bz}) - \bar{\bz}\|^2 + \|\mathcal{P}_{\cZ}(\hat{\bz}) - \hat{\bz}\|^2 - 2 (\|\mathcal{P}_{\cZ}(\hat{\bz})- \bar{\bz}\|) (\|\mathcal{P}_{\cZ}(\hat{\bz}) - \hat{\bz}\|)\cos{\gamma},
\end{align}
where $\gamma$ is the angle between the two vectors~$ \bar{\bz} - \mathcal{P}_{\cZ}(\hat{\bz})$ and $\hat{\bz} - \mathcal{P}_{\cZ}(\hat{\bz})$. Moreover, from \cite[Lemma 3.1]{bubeck2015convex} we know that $\cos{\gamma} \leq 0$. As a result,
\begin{align*}
\ell^2\|\mathcal{P}_{\cZ}(\hat{\bz})- \bar{\bz}\|^2 \leq - \ell^2\; \|\mathcal{P}_{\cZ}(\hat{\bz})- \hat{\bz} \|^2+ \|\nabla  F(\bar{\bz})\|^2 = D(\bar{\bz}),
\end{align*}
where the last equality is due to~\eqref{eq:Dalpha}.
Furthermore, since  $\bar{\bz}$ is an $\epsilon$-stationery point, i.e., $D(\bar{\bz}) \leq \epsilon^2$, we conclude that $\|\mathcal{P}_{\cZ}(\hat{\bz})- \bar{\bz}\| \leq \epsilon/\ell$. In other words, $\bar{\bz}$ is an $\epsilon$-stationary point of the first type. 
\begin{figure}[H]
\begin{minipage}[b]{1.0\linewidth}
  \centering
  \centerline{\includegraphics[width=7.5cm]{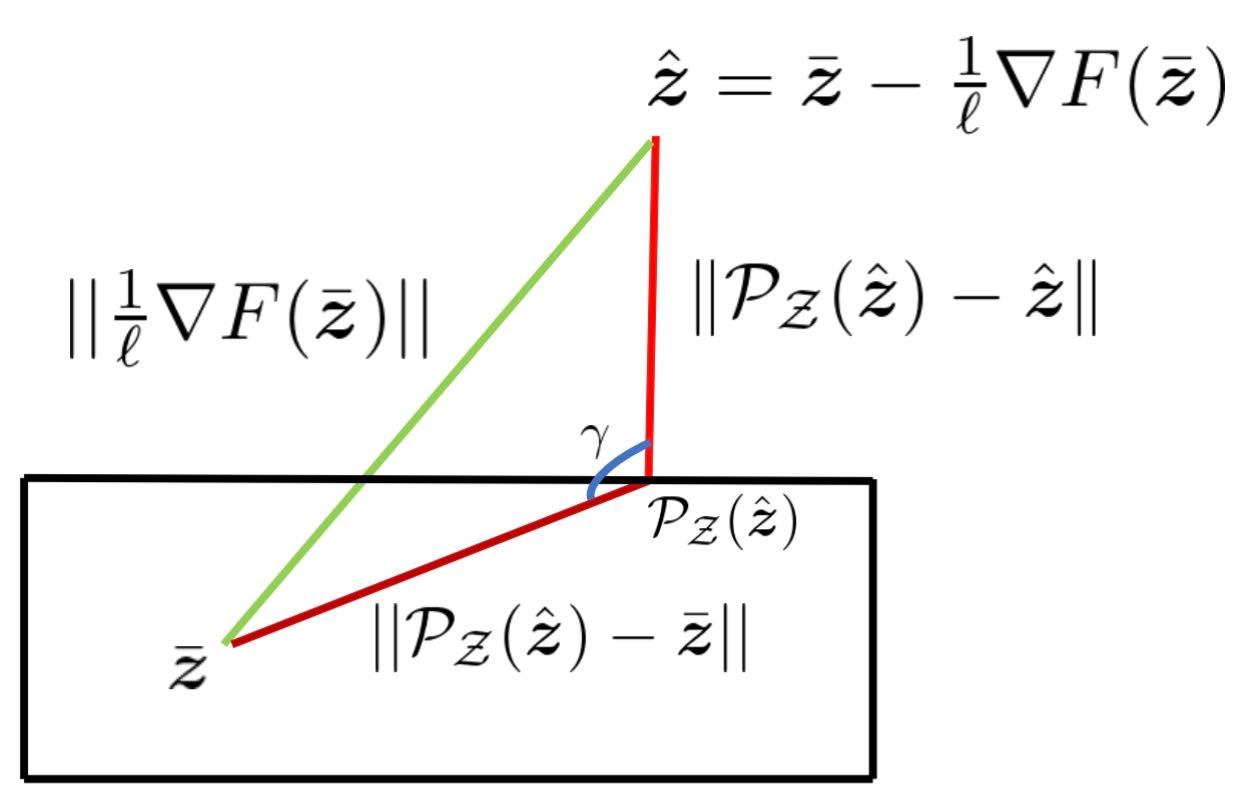}}
%   \centerline{(a) Simulation Result}\medskip
\end{minipage}
\caption{Relation between different notions of stationarity}
\label{fig:general}
\end{figure}

% \begin{tikzpicture}[bullet/.style={circle,fill,inner sep=1.5pt,node contents={}}]
%  \draw (0,0) node[bullet,label=left:$P_C(x)$,alias=PC]  -- (15:3) coordinate[midway] (aux)
%     node[bullet,label=above right:$x$] -- ++ (-145:4.5) node[midway,below right]{$\|x-z\|$}     node[bullet,label=below left:$z$];
%  \draw (PC) to[out=105,in=0] ++ (-0.75,1) to[out=180,in=105] ++ (-1.5,-3)
%  node[above right]{$C$} to[out=-75,in=180] ++ (1.25,-1) to[out=0,in=-75] cycle;
%  \draw[shorten <=2pt,-latex] (aux) to[out=90,in=-90] ++ (110:0.8) node[above]{$\|x-P_C(x)\|$};
% \end{tikzpicture}

Next we show that the stationarity concept in~\eqref{eq:SP2T} is strictly stronger than the stationarity concept in~\eqref{eq:SP1T}. To understand this, let us take an additional look at Fig.~\ref{fig:general} and equation~\eqref{eq:cosine} used in the proof above. Clearly, the two stationarity measures could coincide when $\cos{\gamma} = 0$. Moreover, the two notions have the largest gap when~$\cos{\gamma} = -1$. Fig.~\ref{fig:extreme_cases} shows both of these scenarios. 
\normalsize

\begin{figure}[H]
\centering
\includegraphics[width=.3\textwidth]{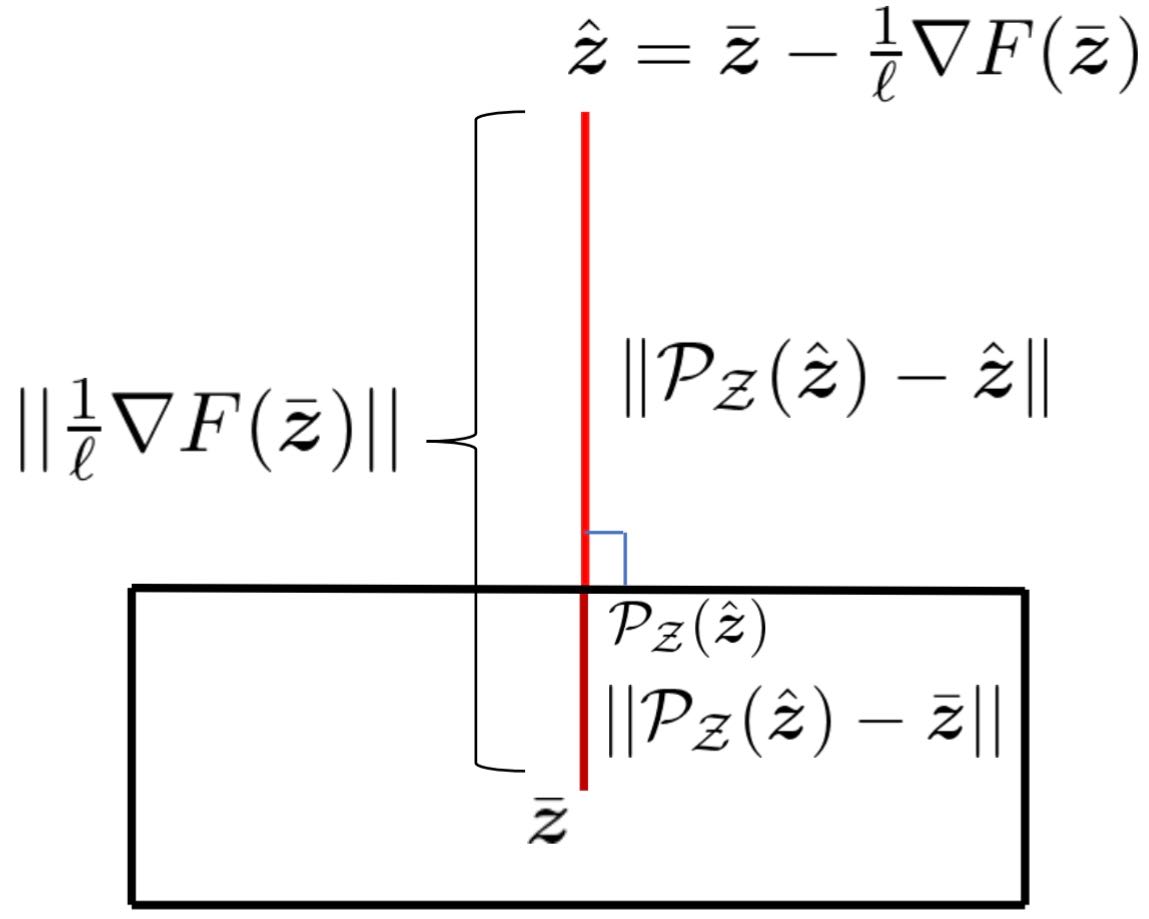}\qquad\qquad\qquad\quad\includegraphics[width=.3\textwidth]{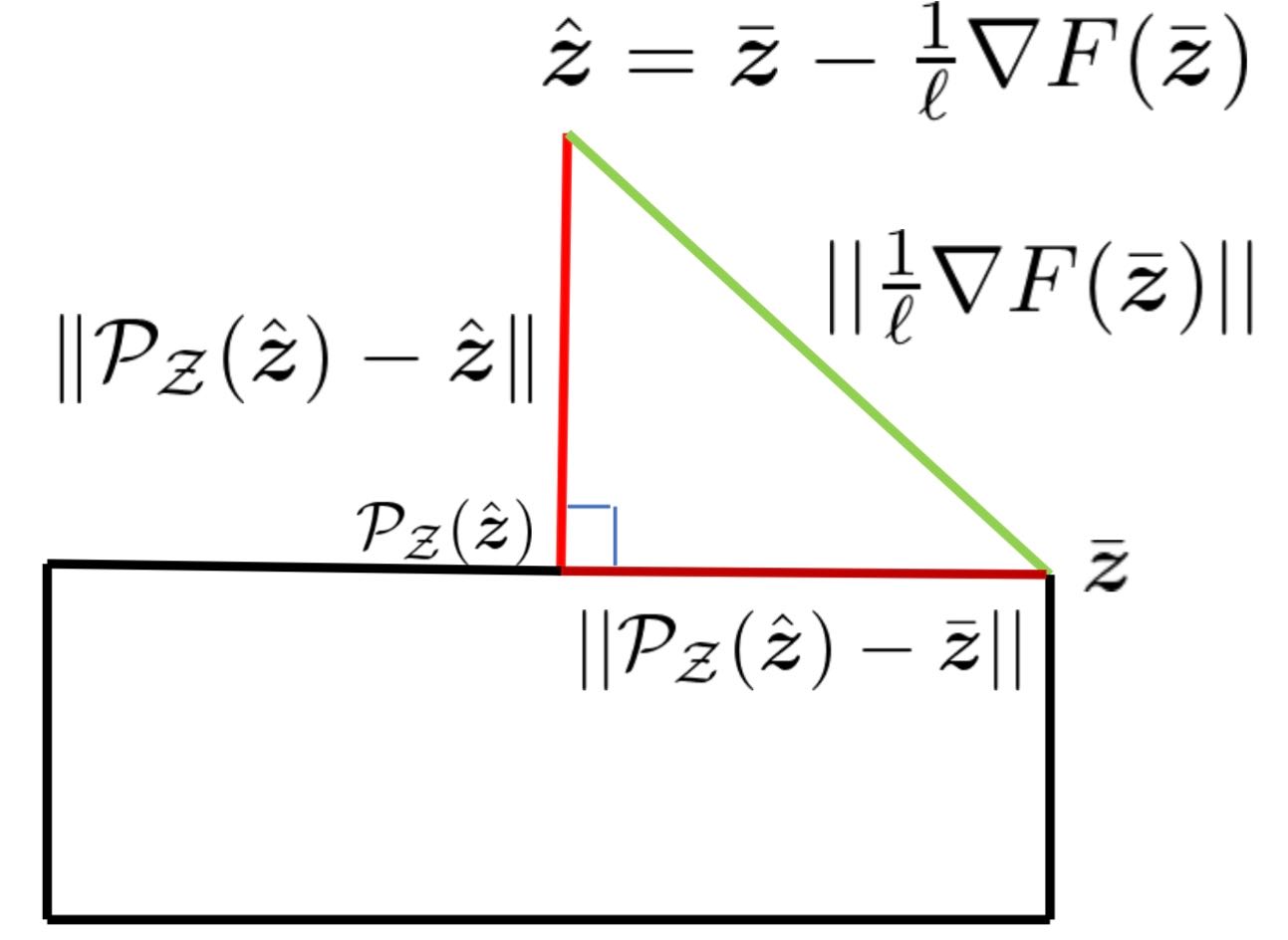}\quad
\caption{ \small (left): $\gamma = \pi$, two measures have the largest deviation, (right): $\gamma =\frac{\pi}{2}$, two measures coincide}
\label{fig:extreme_cases}
\end{figure}
\normalsize
According to Fig.~\ref{fig:extreme_cases}, in order to create an example with largest gap between the two stationarity notions, we need to construct an example with the smallest possible value of $\cos{\gamma}$. In particular, consider the optimization problem
\[
\min_z \;\; \frac{1}{2} z^2 \quad\quad \textrm{s.t.} \;\; z\geq 1.
\]
It is easy to check that the point $\bar{z} = 1+\epsilon$ is an $\epsilon$-stationary point of the first type, while it is not an $\epsilon'$-stationary point of the second type for any $\epsilon' < \sqrt{2\epsilon + \epsilon^2}$.
\end{proof}
\vspace{0.2cm}

Next, we re-state the lemmas used in the main body of the paper and present detailed proof of them.

\vspace{0.2cm}
\lmggsmoot*
\normalsize

\begin{proof}

% First notice that the differentiability of the function $\gl(\cdot)$ follows directly from Danskin's Theorem~\cite{bernhard1995theorem}. It remains to show  that $\gl$ is a Lipschitz smooth function.
The differentiability of the function $g(\btheta)$ is obvious from Danskin's Theorem~\ref{thm:danskin}. In order to find the gradient's Lipschitz constant, define $l(\btheta,\balpha) = -h(\btheta, \balpha) + p(\balpha)$. Let 
\[\balpha_1^* = \argmin_{\balpha \in \cSa} \;l (\btheta_1, \balpha)\quad {\rm  and } \quad \balpha_2^* = \argmin_{\balpha \in \cSa} \;l (\btheta_2, \balpha).\]
Due to $\sigma$--strong convexity of $l(\btheta,\balpha)$ in $\balpha$ for any given $\btheta$, we have 

\[\begin{array}{ll}
l(\btheta_2, \balpha_2^*)& \geq l(\btheta_2, \balpha_1^*) 
+  l^{'}(\btheta_2, \balpha_1^*; \balpha^*_2 - \balpha^*_1) + \dfrac{\sigma}{2}\|\balpha_2^* - \balpha_1^*\|^2,\end{array}\]
and
\[\begin{array}{ll}
l(\btheta_2, \balpha_1^*)& \geq  l(\btheta_2, \balpha_2^*) + l^{'}(\btheta_2, \balpha_2^*; \balpha^*_1 - \balpha^*_2) + \dfrac{\sigma}{2}\|\balpha_2^* - \balpha_1^*\|^2.\end{array}\]

Furthermore, due to optimality of  $\balpha_2^*$, $l^{'}(\btheta_2, \balpha_2^*; \balpha^*_1 - \balpha^*_2) \geq 0$. As a result, by adding the above two inequalities we have
\begin{equation}\label{eq: Lipschtizness-1}
-l^{'}(\btheta_2, \balpha_1^*; \balpha^*_2 - \balpha^*_1) \geq \sigma\|\balpha_2^* - \balpha_1^*\|^2.
\end{equation}

On the other hand, from optimality of $\balpha_1^*$, we have
\begin{equation}\label{eq: Lipschtizness-2}
l^{'}(\btheta_1, \balpha_1^*; \balpha^*_2 - \balpha^*_1)\geq 0.
\end{equation}

Now, by adding ~\eqref{eq: Lipschtizness-1} and~\eqref{eq: Lipschtizness-2} we get
\begin{equation}
\begin{array}{ll}
&\sigma\|\balpha_2^* - \balpha_1^*\|^2 \\ & \leq 
-l^{'}(\btheta_2, \balpha_1^*; \balpha^*_2 - \balpha^*_1) + l^{'}(\btheta_1, \balpha_1^*; \balpha^*_2 - \balpha^*_1)\\
& = \langle  \nabla_{\balpha} h(\btheta_2, \balpha^*_1), \balpha^*_2 - \balpha^*_1 \rangle - p^{'}(\balpha^*_1;\balpha^*_2 - \balpha^*_1 ) - \langle  \nabla_{\balpha} h(\btheta_1, \balpha^*_1), \balpha^*_2 - \balpha^*_1 \rangle + p^{'}(\balpha^*_1;\balpha^*_2 - \balpha^*_1 )\\
& =  \langle  \nabla_{\balpha} h(\btheta_2, \balpha^*_1) - \nabla_{\balpha} h(\btheta_1, \balpha^*_1), \balpha^*_2 - \balpha^*_1 \rangle \\\nonumber
&  \leq L_{12} \|\btheta_1 - \btheta_2\|\|\balpha_2^* - \balpha_1^*\|, \; \;%\circled{1}
% \\ & \leq  \|\balpha_2^* - \balpha_1^*\|^2 
% \\ & \leq \frac{L_{12}}{\sigma}\|\btheta_1 - \btheta_2\|
\end{array}
\end{equation}
where the last inequality holds by Cauchy-Schwartz and the Lipschitzness from Assumption~\ref{assumption: LipSmooth-uncons}.
As a result, we get

\begin{align}\label{eq: Lipschtizness-3}
 \|\balpha_2^* - \balpha_1^*\| \leq \frac{L_{12}}{\sigma} \|\btheta_1 - \btheta_2\|.
\end{align}

Now, Theorem~\ref{thm:danskin} implies that  
\[\arraycolsep=1pt\def\arraystretch{1.6}
\begin{array}{ll}
\|\nabla_{\btheta} g(\btheta_1) - \nabla_{\btheta} g(\btheta_2)\| &= \|\nabla_{\btheta}  h(\btheta_1, \balpha_1^*) - \nabla_{\btheta} h(\btheta_2, \balpha_2^*) \|\\
&= \|\nabla_{\btheta} h(\btheta_1, \balpha_1^*) -\nabla_{\btheta} h(\btheta_2, \balpha_1^*) +\nabla_{\btheta} h(\btheta_2, \balpha_1^*) - \nabla_{\btheta} h(\btheta_2, \balpha_2^*) \|\\
&\leq L_{11}\|\btheta_1 - \btheta_2\| + L_{12}\|\balpha_1^* - \balpha_2^*\|\\
&\leq \Big(L_{11} + \dfrac{L_{12}^2}{\sigma}\Big)\|\btheta_1 - \btheta_2\|,	
\end{array}\]
where the last inequality is due to~\eqref{eq: Lipschtizness-3}.	
\end{proof}

% \begin{lemma}\label{lm:g-smooth} 
% 	  For $\balpha \in \cSa $,  assume that $p(\balpha)$ is convex and  $h(\btheta, \balpha)$ is $\sigma$- strongly concave in $\balpha$ for a given $\btheta$ and its gradients with respect to both $\balpha$ and $\btheta$ is Lipshitz continuous, i.e., there exits constant $L_{11}, L_{22}$ and $L_{12}$ such that, 
	
% 		\[	\begin{array}{ll}
% 	&\|\nabla_{\btheta} h(\btheta_1,\balpha)-\nabla_{\btheta} h(\btheta_2,\balpha)\|\leq L_{11}\|\btheta_1-\btheta_2\|,\nonumber\\
% 	& \|\nabla_{\balpha} h(\btheta,\balpha_1)-\nabla_{\balpha} h(\btheta,\balpha_{2})\|\leq L_{22}\|\balpha_1-\balpha_{2}\|,\\
% 	&\|\nabla_{\balpha} h(\btheta_1,\balpha)-\nabla_{\balpha} h(\btheta_2,\balpha)\|\leq L_{12}\|\btheta_1-\btheta_2\|,\nonumber\\
% 	& \|\nabla_{\btheta} h(\btheta,\balpha_1)-\nabla_{\btheta} h(\btheta,\balpha_{2})\|\leq L_{12}\|\balpha_1-\balpha_{2}\|.\\
% 	\end{array}  \]
% 	 Then the function $g(\btheta) = \max\limits_{\balpha \in \cSa} \; h(\btheta, \balpha) - p(\balpha)$ is $L$-Lipschitz smooth with $L = L_{11} + \dfrac{L_{12}^2}{\lambda}$.
% \end{lemma}

%%%%%%%%%%%%%%%%%%%%%%%%%%%%%%%%%%%% New lemma
 
\begin{restatable}{lem}{remarking}\label{lm:projected-GD} (Rephrased from~\cite{nesterov2013gradient,beck2009fast})
	Assume $F(\bx) = m(\bx) + n(\bx)$, where $m(\bx)$ is $\sigma$-strongly convex and $L$-smooth, $n(\bx)$ is convex and possibly non-smooth (and possibly extended real-valued). % and also $\bx \in \mathcal{F}$. 
	Then, by applying accelerated proximal gradient descent algorithm with restart parameter $N\triangleq \sqrt{8L/\sigma} -1$ for $K$ iterations, with $K$ being a constant multiple of $N$, we get 
	\begin{equation}\label{eq:projected-GD}
	F(\bx_K) - F(\bx^*) \leq \left(\dfrac{1}{2}\right)^{K/N}(F(\bx_0) - F(\bx^*)),
	\end{equation}
	where $\bx_K$ is the iterate obtained at iteration $K$ and $\bx^* \triangleq \argmin\limits_{\bx}\; F(\bx)$.
\end{restatable}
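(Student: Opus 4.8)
The statement is the classical convergence guarantee for FISTA-type accelerated proximal gradient run with periodic restart, specialized to a strongly convex objective, so the plan is to combine the standard sublinear rate over a single restart cycle with the strong convexity of $F$. Observe first that, since $n(\cdot)$ is convex and $m(\cdot)$ is $\sigma$-strongly convex, the sum $F = m + n$ is itself $\sigma$-strongly convex; hence it has a unique minimizer $\bx^*$ and $F(\bx) - F(\bx^*) \ge \tfrac{\sigma}{2}\|\bx - \bx^*\|^2$ for every $\bx$.

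Next I would analyze a single restart cycle, i.e., $N$ consecutive iterations of accelerated proximal gradient with step size $1/L$ and momentum reset ($\beta_1 = 1$, $\by_1 = \bx_0$) from an arbitrary point $\bx_0$. The two ingredients are (i) the sufficient-decrease property of the $1/L$-proximal-gradient map applied at $\by_j$, which follows from the $L$-smoothness of $m$, and (ii) the fact that the momentum weights generated by $\beta_{j+1} = (1+\sqrt{1+4\beta_j^2})/2$ satisfy $\beta_j \ge (j+1)/2$ together with the identity $\beta_{j+1}^2 - \beta_{j+1} = \beta_j^2$. Substituting these into the potential-function/telescoping argument of Beck--Teboulle (tracking a quantity of the form $\beta_j^2(F(\bx_j)-F(\bx^*)) + \tfrac{L}{2}\|\cdot\|^2$) yields, at the end of the cycle,
\[
F(\bx_N) - F(\bx^*) \;\le\; \frac{2L\,\|\bx_0 - \bx^*\|^2}{(N+1)^2}.
\]

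I would then convert this into a contraction using strong convexity, $\|\bx_0 - \bx^*\|^2 \le \tfrac{2}{\sigma}\big(F(\bx_0) - F(\bx^*)\big)$, so that
\[
F(\bx_N) - F(\bx^*) \;\le\; \frac{4L}{\sigma (N+1)^2}\,\big(F(\bx_0) - F(\bx^*)\big).
\]
Choosing $N = \sqrt{8L/\sigma} - 1$, so that $(N+1)^2 = 8L/\sigma$, makes the prefactor exactly $\tfrac12$; thus every restart cycle of length $N$ shrinks the optimality gap by at least a factor of $\tfrac12$. Finally, since Algorithm~\ref{alg: alg_grad} restarts the momentum every $N$ iterations with the current iterate taking the role of the new $\bx_0$, and $K$ is a multiple of $N$, composing the per-cycle contraction over the $K/N$ cycles gives
\[
F(\bx_K) - F(\bx^*) \;\le\; \Big(\tfrac12\Big)^{K/N}\big(F(\bx_0) - F(\bx^*)\big),
\]
which is the desired bound.

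The main obstacle is the per-cycle sublinear estimate $F(\bx_N) - F(\bx^*) \le 2L\|\bx_0 - \bx^*\|^2/(N+1)^2$, which requires the full FISTA telescoping analysis and the algebra of the momentum recursion. This is entirely classical and is exactly what the cited works \cite{nesterov2013gradient,beck2009fast} provide, so in the write-up one would invoke it rather than reproduce it. The remaining pieces---inheriting strong convexity of $F$, the algebraic choice of $N$, and chaining the contraction across $K/N$ restart cycles---are routine.
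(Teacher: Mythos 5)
Your proposal is correct: the per-cycle FISTA bound $F(\bx_N)-F(\bx^*)\leq 2L\|\bx_0-\bx^*\|^2/(N+1)^2$, the strong-convexity conversion $\|\bx_0-\bx^*\|^2\leq \tfrac{2}{\sigma}\left(F(\bx_0)-F(\bx^*)\right)$, and the choice $(N+1)^2=8L/\sigma$ giving a factor $\tfrac12$ per restart cycle, composed over $K/N$ cycles, is exactly the standard restart argument. The paper does not reproduce a proof of this lemma (it is stated as rephrased from the cited works of Nesterov and Beck--Teboulle), and your derivation matches the classical argument those references supply.
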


\begin{restatable}{lem}{errorbound}\label{lm:error_bound}
 Let $\balpha_{t+1}$ to be the output of the accelerated proximal gradient descent in Algorithm~\ref{alg: alg_grad} at iteration $t$.  Assume  $\kappa = \dfrac{L_{22}}{\sigma} \geq 1$, and  $\gl(\btheta_t) - \left(h(\btheta_{t}, \balpha_{0}(\btheta_{t})) - p(\balpha_{0}(\btheta_{t}))\right) < \Delta$. Then for any prescribed $\epsilon \in (0,1)$, choose $K$ large enough such that 
\begin{equation}\label{K-value}\nonumber
K \geq  2 \sqrt{8\kappa}\left(\log L_{22} + \log \;( 2L_{22} R + g_{\max} + L_p + R) + 2 \log \;\left(\frac{1}{\epsilon}\right) + \frac{1}{2} \log \;\left(\frac{2\Delta}{\sigma}\right)\right),
\end{equation}

	where $g_{max} = \max\limits_{\balpha \in \cSa }\|\nabla_{\balpha} h(\btheta_t,\balpha)\|$. Then the error $e_t \triangleq \hl- \nabla \gl(\btheta_t)$ has a norm 
	\begin{equation*}
	\|e_t\|\leq \delta \triangleq \frac{L_{12}}{L_{22}(2L_{22}R + g_{\max} + L_p + R)}\epsilon^2
	% \dfrac{L\epsilon^2}{64\bar{R}^3\bar{L}^2} 
	\end{equation*}
	and
	\begin{equation*}
	 \epsilon^2 \geq \cY(\bt)  \triangleq L_{22} \max_{\balpha \in \cSa}\,\, 	\langle \hla, \balpha -\balpha_{t+1} \rangle  - p(\balpha) + p(\balpha_{t+1}) - \frac{L_{22}}{2} ||\balpha - \balpha_{t+1}||^2 
	\end{equation*}
\end{restatable}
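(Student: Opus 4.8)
The plan is to control the error $e_t = \nabla_{\btheta} h(\btheta_t,\balpha_{t+1}) - \nabla g(\btheta_t)$ by first controlling how close $\balpha_{t+1}$ is to the true maximizer $\balpha^*(\btheta_t)$ of the inner problem, and then to translate this into the claimed bound on $\cY(\btheta_t,\balpha_{t+1})$. First I would invoke Danskin's Theorem~\ref{thm:danskin}: since $h(\btheta_t,\cdot) - p(\cdot)$ has a unique maximizer $\balpha^*(\btheta_t)$ by strong concavity, $\nabla g(\btheta_t) = \nabla_{\btheta} h(\btheta_t,\balpha^*(\btheta_t))$. Hence
\[
\|e_t\| = \|\nabla_{\btheta} h(\btheta_t,\balpha_{t+1}) - \nabla_{\btheta} h(\btheta_t,\balpha^*(\btheta_t))\| \leq L_{12}\|\balpha_{t+1} - \balpha^*(\btheta_t)\|,
\]
using the cross-Lipschitz bound from Assumption~\ref{assumption: LipSmooth-uncons}. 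So everything reduces to an $\epsilon$-type bound on $\|\balpha_{t+1} - \balpha^*(\btheta_t)\|$.

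Next I would apply Lemma~\ref{lm:projected-GD} to $F(\balpha) = \big(-h(\btheta_t,\balpha) + p(\balpha)\big)$, which is $\sigma$-strongly convex and $L_{22}$-smooth (in $\balpha$, for fixed $\btheta_t$), using the restart parameter $N = \sqrt{8L_{22}/\sigma} - 1$ exactly as in the Accelerated Proximal Gradient Ascent loop. This yields the linear convergence in function value
\[
F(\balpha_{t+1}) - F(\balpha^*(\btheta_t)) \leq \left(\tfrac{1}{2}\right)^{K/N}\big(F(\balpha_0(\btheta_t)) - F(\balpha^*(\btheta_t))\big) \leq \left(\tfrac{1}{2}\right)^{K/N}\Delta,
\]
where the last step uses the hypothesis $g(\btheta_t) - \big(h(\btheta_t,\balpha_0(\btheta_t)) - p(\balpha_0(\btheta_t))\big) < \Delta$. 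By $\sigma$-strong convexity, $\tfrac{\sigma}{2}\|\balpha_{t+1} - \balpha^*(\btheta_t)\|^2 \leq F(\balpha_{t+1}) - F(\balpha^*(\btheta_t))$, so $\|\balpha_{t+1} - \balpha^*(\btheta_t)\| \leq \sqrt{\tfrac{2\Delta}{\sigma}}\,(1/2)^{K/(2N)}$. Plugging the stated lower bound on $K$ (noting $N \le \sqrt{8\kappa}$ so $K/(2N) \ge K/(2\sqrt{8\kappa})$, and taking logarithms base $2$ — I'd absorb the difference between $\log$ and $\log_2$ into constants, or simply note the bound is written with natural log and a $\sqrt{8\kappa}$ factor that makes the exponent dominate) drives $(1/2)^{K/(2N)}$ below $\tfrac{\sigma}{L_{12}}\cdot\tfrac{L_{12}}{L_{22}(2L_{22}R+g_{\max}+L_p+R)}\epsilon^2 / \sqrt{2\Delta/\sigma}$, giving precisely $\|e_t\| \le L_{12}\|\balpha_{t+1}-\balpha^*(\btheta_t)\| \le \delta$.

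For the second conclusion — the bound on $\cY(\btheta_t,\balpha_{t+1})$ — I would again use that $\balpha_{t+1}$ is (up to the same function-value error) the maximizer of $h(\btheta_t,\cdot)-p(\cdot)$. The inner concave maximand $\Phi(\balpha) \triangleq \langle \nabla_{\balpha} h(\btheta_t,\balpha_{t+1}),\balpha-\balpha_{t+1}\rangle - p(\balpha) + p(\balpha_{t+1}) - \tfrac{L_{22}}{2}\|\balpha-\balpha_{t+1}\|^2$ is the standard proximal-gradient surrogate; its maximum over $\cSa$ relates to the prox-gradient mapping at $\balpha_{t+1}$, and a short computation (completing the square, as in the Remark~\ref{optimilaity_condtion} discussion) shows $\max_{\balpha}\Phi(\balpha) = \tfrac{L_{22}}{2}\|\rho_{\balpha}(\btheta_t,\balpha_{t+1},L_{22}) - \balpha_{t+1}\|^2 \cdot(\text{const})$, so $\cY \le L_{22}^2 \|\rho_{\balpha}(\btheta_t,\balpha_{t+1},L_{22}) - \balpha_{t+1}\|^2$ up to constants. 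Since the prox-gradient step from a point that is $O((1/2)^{K/(2N)})$-close to the true maximizer produces a displacement of the same order (by nonexpansiveness of the prox operator and smoothness of $\nabla_\balpha h$), and since the true maximizer has zero prox-gradient residual, this is $\le \epsilon^2$ for $K$ as large as specified. The main obstacle I anticipate is the bookkeeping in this last step: getting the constants $(2L_{22}R + g_{\max} + L_p + R)$ to appear cleanly requires carefully bounding the prox-residual and the function-value gap $\Delta$ in terms of the diameter $R$ and the gradient/Lipschitz constants (e.g.\ bounding $F(\balpha_0)-F(\balpha^*)$ via $g_{\max}$, $L_p$ and $R$), rather than any conceptual difficulty — the convergence and Danskin arguments themselves are routine.
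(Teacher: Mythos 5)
Your treatment of the first claim (the bound on $\|e_t\|$) is exactly the paper's argument: Danskin's theorem identifies $\nabla \gl(\btheta_t)=\nabla_{\btheta}h(\btheta_t,\balpha^*(\btheta_t))$, the cross-Lipschitz constant $L_{12}$ reduces everything to $\|\balpha_{t+1}-\balpha^*(\btheta_t)\|$, and Lemma~\ref{lm:projected-GD} together with $\sigma$-strong concavity and the choice of $K$ controls that distance; no issue there. The gap is in your second step, the bound on $\cY(\btheta_t,\balpha_{t+1})$. Writing $\Phi(\balpha)\triangleq\langle\nabla_{\balpha}h(\btheta_t,\balpha_{t+1}),\balpha-\balpha_{t+1}\rangle-p(\balpha)+p(\balpha_{t+1})-\frac{L_{22}}{2}\|\balpha-\balpha_{t+1}\|^2$, your claimed "completing the square" identity $\max_{\balpha\in\cSa}\Phi(\balpha)=\mathrm{const}\cdot\frac{L_{22}}{2}\|\rho_{\balpha}(\btheta_t,\balpha_{t+1},L_{22})-\balpha_{t+1}\|^2$, and hence $\cY\lesssim L_{22}^2\|\rho_{\balpha}-\balpha_{t+1}\|^2$, holds only in the unconstrained smooth case $p\equiv0$, $\cSa=\R^{d_\alpha}$ --- which is precisely not the setting of this lemma. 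With a constraint set and a nonsmooth $p$ the relation is one-sided: $L_{22}$-strong concavity of $\Phi$ gives $\max_{\cSa}\Phi\ge\frac{L_{22}}{2}\|\rho_{\balpha}-\balpha_{t+1}\|^2$, but in the direction you need only a \emph{linear}-in-displacement bound is available, e.g.\ $\max_{\cSa}\Phi\le(g_{\max}+L_p)\|\rho_{\balpha}-\balpha_{t+1}\|$ (take a subgradient of $p$ at $\balpha_{t+1}$, use convexity and Cauchy--Schwarz). A one-dimensional example such as $\cSa=[1,2]$, $p\equiv0$, with a large gradient pushing into the feasible set at $\balpha_{t+1}=1$ shows the quadratic upper bound is genuinely false: the displacement saturates at the diameter while $\Phi(\rho_\balpha)$ grows with the gradient norm. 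So the step "$\cY\le L_{22}^2\|\rho_{\balpha}-\balpha_{t+1}\|^2$ up to constants" would fail as written.

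Your strategy is salvageable, and it is genuinely different from the paper's. Your observation that $\balpha^*(\btheta_t)$ is a fixed point of the prox-gradient map, and that nonexpansiveness of the prox operator plus $L_{22}$-smoothness give $\|\rho_{\balpha}(\btheta_t,\balpha_{t+1},L_{22})-\balpha_{t+1}\|\le 3\|\balpha_{t+1}-\balpha^*(\btheta_t)\|$, is correct; combined with the linear bound above it yields $\cY(\btheta_t,\balpha_{t+1})\le 6L_{22}(g_{\max}+L_p)\|\balpha_{t+1}-\balpha^*(\btheta_t)\|$, which the stated choice of $K$ drives below $\epsilon^2$ (up to the same constant bookkeeping the lemma's factor $2L_{22}R+g_{\max}+L_p+R$ is designed to absorb). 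The paper avoids the prox map at $\balpha_{t+1}$ altogether: it expands $\cY$ around $\balpha^*(\btheta_t)$, uses that the surrogate centered at the \emph{exact} maximizer has maximum value zero, and bounds the remaining cross terms by $(2L_{22}R+g_{\max}+L_p)\|\balpha_{t+1}-\balpha^*(\btheta_t)\|$ --- again a first-order, not second-order, dependence on the distance to the maximizer. Whichever route you take, the quantity you can legitimately extract is linear in the prox displacement, and you should rerun your constant bookkeeping with that in mind.
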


\begin{proof}
    From Lemma~\ref{lm:projected-GD} we have, 
	\begin{equation}\label{eq:Updating-alpha-1}
	\gl(\btheta_t) - \big(h(\bt) - p(\balpha_{t+1}) \big)\leq \dfrac{1}{2^{\frac{K}{\sqrt{8\kappa}}}}\Delta.
	\end{equation}

	Let $\balpha^*(\btheta_t) \triangleq \arg\max\limits_{\balpha \in \cSa} \; h(\btheta_t,\balpha ) - p(\balpha) $. By combining~\eqref{eq:Updating-alpha-1} and strong concavity of $h(\btheta_t,\balpha ) - p(\balpha)$ in $\balpha$, we get
	
	\begin{equation*}\label{eq:Updating-alpha-2}
	\dfrac{\sigma}{2}\|\balpha_{t+1} - \balpha^*(\btheta_t)\|^2 \leq   \gl(\btheta_t) - \big(h(\bt) - p(\balpha_{t+1}) \big) \leq \dfrac{1}{2^{\frac{K}{\sqrt{8\kappa}}}}\Delta.
	\end{equation*}

Combining this inequality with  Assumption~\ref{assumption: LipSmooth-uncons} implies that 
	\begin{align*}\label{eq:Updating-alpha-3}
		\|e_t\|&=\| \hl- \nabla \gl(\btheta_t)\| =\| \hl - \hls \| \\
	&\leq L_{12} \|\balpha_{t+1} - \balpha^*(\btheta_t)\|\leq  \dfrac{L_{12}}{2^{K/2\sqrt{8\kappa}}} \sqrt{\dfrac{2\Delta}{\sigma}}\leq  \frac{L_{12}}{L_{22}(2L_{22}R + g_{\max} + L_p + R)}\epsilon^2,
	\end{align*}
	
% 	{\color{red} condition 1 on K}where the second inequality uses~\eqref{eq:Updating-alpha-2}, and the third inequality uses the choice of $K$ in \eqref{K-value} which yields
% 	\[\begin{array}{l}
% 	\Big({\dfrac{1}{2}}\Big)^K \leq \dfrac{L^2\epsilon^4 \lambda}{2^{13} \Delta \bar{R}^2 \bar{L}^2(2\bar{L}\bar{R})^4} \leq \dfrac{2^2\epsilon^4 \lambda}{2^{13} \Delta R^2 \bar{L}^2(\bar{L} + \bar{L}R)^4}
% 	\leq \dfrac{L^2\epsilon^4 \lambda}{2^{13}\Delta R^2 \bar{L}^2(g_{max} + LR)^4}\end{array}.\]
% 	Here the second inequality holds since $\bar{R}\geq 1$, and the third inequality holds since $g_{max}\leq \bar{L}$.
% 	To prove the second argument of the lemma, we also use the Lipschitz smoothness property of the objective to get
where the last inequality comes from our choice of $K$.

\vspace{0.2cm}

Next, let us prove the second part of the lemma. First notice that by some algebraic manipulations, we can write
\begin{align*}
&\frac{1}{2L_{22}}\cY(\bt) \\
 = &  \max_{\balpha \in \cSa}\,\, \Big[	\langle \hla, \balpha -\balpha_{t+1} \rangle  - p(\balpha) + p(\balpha_{t+1}) - \frac{L_{22}}{2} ||\balpha -\balpha_{t+1} ||^2 \Big]
\\ = & \max_{\balpha \in \cSa}\,\, \Big[	\langle \hla, \balpha -\balpha_{t+1}  \rangle  - p(\balpha) + p(\balpha_{t+1}) - \frac{L_{22}}{2} ||\balpha -\balpha^*(\btheta_{t}) + \balpha^*(\btheta_{t}) - \balpha_{t+1}||^2 \Big]
\\ = &\max_{\balpha \in \cSa}\,\, \Big[	\langle \hla, \balpha -\balpha_{t+1}  \rangle  - p(\balpha) + p(\balpha_{t+1}) 
\\& \quad - \frac{L_{22}}{2} \|\balpha - \balpha^*(\btheta_t)\|^2  - \frac{L_{22}}{2} \|\balpha^*(\btheta_{t}) - \balpha_{t+1}\|^2 - L_{22}\langle \balpha - \balpha^*(\btheta_{t}), \balpha^*(\btheta_{t}) -  \balpha_{t+1} \rangle \Big]
\\= & \max_{\balpha \in \cSa}\,\, \Big[	\langle \hla, \balpha -\balpha_{t+1} \rangle  - p(\balpha) + p(\balpha_{t+1}) - \frac{L_{22}}{2} \|\balpha - \balpha^*(\btheta_t)\|^2  
\\ & \quad  - \frac{L_{22}}{2} \|\balpha^*(\btheta_{t}) - \balpha_{t+1}\|^2 - L_{22} \langle \balpha - \balpha^*(\btheta_{t}), \balpha^*(\btheta_{t}) -  \balpha_{t+1} \rangle - p(\balpha^*(\btheta_{t})) + p(\balpha ^*(\btheta_{t}))\Big].
\end{align*}

Thus, we obtain

\begin{align*}
&\frac{1}{2L_{22}}\cY(\bt) 
\\ \leq & \max_{\balpha \in \cSa}\,\, \Big[	\langle \hla, \balpha -\balpha_{t+1} \rangle  - p(\balpha) + p(\balpha_{t+1}) 
\\ & \quad - \frac{L_{22}}{2} \|\balpha - \balpha^*(\btheta_t)\|^2 - L_{22} \langle \balpha - \balpha^*(\btheta_{t}), \balpha^*(\btheta_{t}) -  \balpha_{t+1} \rangle - p(\balpha^*(\btheta_{t})) + p(\balpha ^*(\btheta_{t}))\Big]
\\  = &  \max_{\balpha \in \cSa}\,\, \Big[	\langle \hla - \hlss, \balpha -\balpha_{t+1} \rangle 
\\  & \quad + \langle \hlss, \balpha - \balpha_{t+1} \rangle - p(\balpha) + p(\balpha_{t+1})
\\  & \quad - \frac{L_{22}}{2} \|\balpha - \balpha^*(\btheta_t)\|^2 - L_{22}\langle \balpha - \balpha^*(\btheta_{t}), \balpha^*(\btheta_{t}) -  \balpha_{t+1} \rangle   - p(\balpha^*(\btheta_{t})) + p(\balpha ^*(\btheta_{t})) \Big] 
\\ \leq &  \max_{\balpha \in \cSa}\,\, 	\Big[\langle \hla - \hlss, \balpha -\balpha_{t+1} \rangle + \langle \hlss, \balpha^*(\btheta_{t}) - \balpha_{t+1} \rangle 
\\ & \quad + p(\balpha_{t+1})  - p(\balpha^*(\btheta_{t})) - L_{22}\langle \balpha - \balpha^*(\btheta_{t}), \balpha^*(\btheta_{t}) -  \balpha_{t+1} \rangle \Big] 
\\ & \quad + \underbrace{\max_{\balpha \in \cSa} \Big[ \langle \hlss, \balpha - \balpha^*(\btheta_{t}) \rangle - p(\balpha) + p(\balpha^*(\btheta_{t})) - \frac{L_{22}}{2}\|\balpha- \balpha^*(\btheta_{t})\|^2}_{= 0 } \Big]
\\    \leq &  \max_{\balpha \in \cSa}\,\, \Big[	\langle \hla - \hlss, \balpha -\balpha_{t+1} \rangle + \langle \hlss, \balpha^*(\btheta_{t}) - \balpha_{t+1} \rangle 
\\ &  \quad  + p(\balpha_{t+1})  - p(\balpha^*(\btheta_{t})) - L_{22}\langle \balpha - \balpha^*(\btheta_{t}), \balpha^*(\btheta_{t}) -  \balpha_{t+1} \rangle \Big]
\\ \leq &    L_{22} \|\balpha_{t+1} - \balpha^*(\btheta_{t})\| R  + g_{\max} \|\balpha_{t+1} - \balpha^*(\btheta_{t})\| + L_{p} \|\balpha_{t+1} - \balpha^*(\btheta_{t})\| + R L_{22}  \|\balpha_{t+1} - \balpha^*(\btheta_{t})\| 
\\  \leq & ( L_{22} R + g_{\max} + L_p + RL_{22}) \|\balpha_{t+1} - \balpha^*(\btheta_{t})\|.
\end{align*}

As a result,
\begin{align*}
\cY(\bt) & \leq L_{22}(2L_{22} R + g_{\max} + L_p ) \|\balpha_{t+1} - \balpha^*(\btheta_{t})\| 
\\ & \leq L_{22}(2L_{22} R + g_{\max} + L_p )\frac{1}{{2^{K/2\sqrt{8\kappa}}}}  \sqrt{\dfrac{2\Delta}{\sigma}}
\\ & \leq \epsilon^2,
\end{align*}
where the last inequality follows from the choice of $K$.
\end{proof}
%{\color{red}unit final issue}
%\Thmm*

\noindent\textbf{Theorem~\ref{thm:main}.} [Formal Statement]
{\it	Consider the min-max zero sum game
\begin{align*}
\min_{\btheta \in \cSt}\; \max_{\balpha \in \cSa} \;\;(f(\btheta,\balpha) = h(\btheta,\balpha) -p(\balpha) + q(\btheta)),
\end{align*}
where the function $h(\btheta,\balpha)$ is $\sigma-$strongly concave. Let $D = g(\btheta_0) + q(\btheta_0) - \displaystyle{\min_{\btheta\in \Theta}}\left(g(\btheta) + q(\btheta)\right)$ where $g(\btheta) = \max_{\balpha\in\mathcal{A}} h(\btheta,\balpha) - p(\balpha)$, and $L_g = L_{11} + \frac{L_{12}^2}{\sigma}$ be the Lipschitz constant of the gradient of $g$. In  Algorithm~\ref{alg: alg_grad}, if we set $\eta_1 = \frac{1}{L_{22}}, \eta_2 = \frac{1}{L_g}$, $N = \sqrt{8L_{22}/\sigma}-1$ and choose  $K$ and $T$ large enough such that	
	\[
	T \geq N_T(\epsilon) \triangleq  \frac{4L_g D}{\epsilon^2}
	%T \geq N_T(\epsilon) \triangleq {\cal O}(\epsilon^{-2})
	\]
 and
\[
 K \geq N_K(\epsilon) \triangleq  2 \sqrt{8\kappa}\Bigg(C + 2 \log \;\left(\frac{1}{\epsilon}\right) + \frac{1}{2} \log \;\left(\frac{2\Delta}{\sigma}\right)\Bigg),
  \]
  where $C = \max \Big\{2\log 2 + \log \;( L_gL_{12} R), \log L_{22} + \log \;( 2L_{22} R + g_{\max} + L_p + R) \Big\}$ and $\kappa = \frac{L_{22}}{\sigma}$, 
%  \begin{align}
%  K \geq N_K(\epsilon) \triangleq \max\bigg\{&2 \sqrt{8\kappa}\left(2\log 2 + \log \;( L_gL_{12} R) + 2 \log \;(\frac{1}{\epsilon}) + \frac{1}{2} \log \;(\frac{2\Delta}{\sigma})\right), \nonumber\\
%  &2 \sqrt{8\kappa}\left(\log L_{22} + \log \;( 2L_{22} R + g_{\max} + L_p + R) + 2 \log \;\left(\frac{1}{\epsilon}\right) + \frac{1}{2} \log \;\left(\frac{2\Delta}{\sigma}\right)\right)\bigg\}
%   \end{align}
	then there exists an iteration $t\in \{0,\cdots, T\}$ such that $(\btheta_t,\balpha_{t+1})$ is an $\epsilon$--FNE of \eqref{eq: game1-cons}. % for any given scalar $\epsilon \in (0,1)$.
	}

% \begin{thm}\label{thm:main2}
% 	Consider the min-max zero sum game defined as following,
% \begin{align*}
% \min_{\btheta \in \cSt}\; \max_{\balpha \in \cSa} \;\;f(\btheta,\balpha) = h(\btheta,\balpha) -p(\balpha) + q(\btheta),\label{eq: game1-cons}
% \end{align*}
% where function $h(\btheta,\balpha)$ is $\sigma$- strongly concave that satisfies  Assumptions~\ref{assumption: LipSmooth-uncons} , $p(\balpha)$ and $\q(\btheta)$ are convex and (potentially) non-differentiable. Additionally, assume that feasible sets satisfy assumption \ref{assumption:Concavity}. In the proposed multi-step algorithm \ref{alg: alg_grad}, if we choose $K$ and $T$ large enough such that	
% 	\[T \geq N_T(\epsilon) \triangleq {\cal O}(\epsilon^{-2}) \quad {\rm and} \quad	K \geq N_K(\epsilon) \triangleq {\cal O}(\log\big(\epsilon^{-1})\big),\]
% 	then there exists an iteration $t\in \{0,\cdots, T\}$ such that $(\btheta_t,\balpha_{t+1})$ is an $\epsilon$--FNE of \eqref{eq: game1-cons} for any given scalar $\epsilon \in (0,1)$.
% \end{thm}

\begin{proof}
First, by  descent lemma we have

\begin{align*}
& \gl(\btheta_{t+1}) + \ql(\btheta_{t+1}) \\
\leq & \; \gl(\btheta_t) + \langle \nabla_{\btheta}\gl(\btheta_t), \btheta_{t+1} - \btheta_t\rangle + \frac{L_g}{2}\|\btheta_{t+1} - \btheta_t\|^2 + \ql(\btheta_{t+1})\\
= & \;\gl(\btheta_t) + \langle \hls, \btheta_{t+1} - \btheta_t\rangle + \frac{L_g}{2}\|\btheta_{t+1} - \btheta_t\|^2 + \ql(\btheta_{t+1})
\\ = & \; \gl(\btheta_t) + \langle \hl, \btheta_{t+1} - \btheta_t\rangle + \frac{L_g}{2}\|\btheta_{t+1} - \btheta_t\|^2 + \ql(\btheta_{t+1})
\\&  - \langle \hl - \hls, \btheta_{t+1} - \btheta_t\rangle \\ {=} & \; \gl(\btheta_{t}) + \ql(\btheta_{t})+ \min_{\btheta \in \cSt} \Big[ \langle \hl, \btheta - \btheta_t\rangle +  \frac{L_g}{2}\|\btheta - \btheta_t\|^2 + \ql(\btheta) - \ql(\btheta_{t})\Big]
\\&  - \langle \hl - \hls, \btheta_{t+1} - \btheta_t\rangle,
\end{align*}
where the last equality follows the definition of $\btheta_{t+1}$. Thus we get, 
\begin{align}
& \gl(\btheta_{t+1}) + \ql(\btheta_{t+1})
\nonumber\\ \leq & \; \gl(\btheta_{t}) + \ql(\btheta_{t})+ \min_{\btheta \in \cSt} \Big[ \langle \hl, \btheta - \btheta_t\rangle +  \frac{L_g}{2}\|\btheta - \btheta_t\|^2 + \ql(\btheta) - \ql(\btheta_{t})\Big]\nonumber
\\&  - \langle \hl - \hls, \btheta_{t+1} - \btheta_t\rangle \nonumber \\ = &  \; \gl(\btheta_{t}) + \ql(\btheta_{t})+ \frac{1}{2 L_g} 2L_g\min_{\btheta \in \cSt} \Big[ \langle \hl, \btheta - \btheta_t\rangle +  \frac{L_g}{2}\|\btheta - \btheta_t\|^2 + \ql(\btheta) - \ql(\btheta_{t})\Big] \nonumber
\\&  - \langle \hl - \hls, \btheta_{t+1} - \btheta_t\rangle
\nonumber\\ \stackrel{\circled{1}}{\leq} &  \gl(\btheta_{t}) + \ql(\btheta_{t})+{ \frac{1}{2 L_g} 2L_{11}\min_{\btheta \in \cSt} \Big[ \langle \hl, \btheta - \btheta_t\rangle +  \frac{L_{11}}{2}\|\btheta - \btheta_t\|^2 + \ql(\btheta) - \ql(\btheta_{t})\Big]}
\nonumber\\& { - \langle \hl - \hls, \btheta_{t+1} - \btheta_t\rangle }
\nonumber\\  \leq &  \; \gl(\btheta_t) + \ql(\btheta_t) - \frac{1}{2L_g}\cX(\btheta_{t},\balpha_{t+1}) + L_{12}\|\balpha_K(\btheta_t) - \balpha^*(\btheta_t)\|R,  \label{eq:LastInq}
\end{align}
where \circled{1} is due to \cite[Lemma 1]{karimi2016linear}. % {\color{red} $L \geq 1$, does this not causes issue for generality of theorems? proosed method dose not consider the fact that inner function has L}
Now if we choose
\[
K_{1} \geq   2 \sqrt{8\kappa}\Bigg(2\log 2 + \log \;\left( L_gL_{12} R\right) + 2 \log \;\left(\frac{1}{\epsilon}\right) + \frac{1}{2} \log \;\left(\frac{2\Delta}{\sigma}\right)\Bigg),
\]
we have 
\[
L_{12}R\|\balpha_K(\btheta_t) - \balpha^*(\btheta_t)\| \leq \frac{\epsilon^2}{4L_g},
\]
due to Lemma~\ref{lm:projected-GD}. Combining this inequality with~\eqref{eq:LastInq} and  summing up both sides of the inequality~\eqref{eq:LastInq}, we obtain 
\begin{align*}
\sum_{t = 0}^{T -1} \left(\frac{1}{2L_g}\cX(\btheta_{t},\balpha_{t+1}) - \frac{\epsilon^2}{4L_g}\right) \leq \gl(\btheta_0) + q(\btheta_0) - (\gl(\btheta_T) + q(\btheta_T)) \leq D.
\end{align*}

As a result, by picking $T \geq \frac{4L_g D }{\epsilon^2}$, at least for one of the iterates $t \in \{1,\cdots, T\}$ we have $\cX(\btheta_{t},\balpha_{k}(\btheta_t)) \leq \epsilon^2 $. 

On the other hand, for that point $t$ from Lemma~\ref{lm:error_bound}, if we choose 
\[K_2 \geq  2 \sqrt{8\kappa}\Bigg(\log L_{22} + \log \;\left( 2L_{22} R + g_{\max} + L_p + R\right) + 2 \log \;\left(\frac{1}{\epsilon}\right) + \frac{1}{2} \log \;\left(\frac{2\Delta}{\sigma}\right)\Bigg)\]
we have $\cY(\bt) \leq \epsilon^2$. Finally setting $K =\max\{K_1, K_2\}$ will result in $\cY(\bt)\leq \epsilon^2$ and $\cX(\btheta_{t}, \balpha_{t+1})\leq \epsilon^2$. This completes the proof.% {\color{red}Do we need to say max of previous k and this k}.
%This completes the proof. 
% First notice that the differentiability of the function $\gl(\cdot)$ follows directly from Danskin's Theorem~\cite{bernhard1995theorem}. It remains to show  that $\gl$ is a Lipschitz smooth function.

\end{proof}

%%%%%%%%%%%%%%%%%%%%%%%%%%%%%%%%%%%% end lemma

\noindent\textbf{Theorem~\ref{thm: FW}.} [Formal Statement]
{\it Consider the min-max zero sum game
\[
\min_{\btheta \in \cSt}\; \max_{\balpha \in \cSa} \;\;\bigg(f(\btheta,\balpha) = h(\btheta,\balpha) -p(\balpha) + q(\btheta)\bigg),
\]
% \begin{align}\label{eq:thmconcvae}\nonumber
% \min_{\btheta \in \cSt}\; \max_{\balpha \in \cSa} \;\;(f(\btheta,\balpha) = h(\btheta,\balpha) -p(\balpha) + q(\btheta)),
% \end{align} 
where the function $h(\btheta,\balpha)$ is concave. Define $f_{\lambda}(\btheta,\balpha) = f(\btheta,\balpha) -\frac{\lambda}{2} \|\balpha - \hat{\balpha}\|^2$ and $g_{\lambda}(\btheta) = \max\limits_{\balpha\in\mathcal{A}} h(\btheta,\balpha) -\frac{\lambda}{2} \|\balpha - \hat{\balpha}\|^2 - p(\balpha)$ for some $\hat{\balpha} \in \cSa$ . Let $D = g_{\lambda}(\btheta_0) + q(\btheta_0) - \displaystyle{\min_{\btheta\in \Theta}}\left(g_{\lambda}(\btheta) + q(\btheta)\right)$ and $L_{g_{\lambda}} = L_{11} + \frac{L_{12}^2}{\lambda}$ be the Lipschitz constant of the gradient of $g_{\lambda}$. In Algorithm~\ref{alg: alg_grad} if we set $ \eta_1 = \dfrac{1}{L_{22} + \lambda},\;\eta_2 = \frac{1}{L_{g_{\lambda}}},\; N = \sqrt{\frac{8 (L_{22} + \lambda)}{\lambda}}-1,\;\lambda = \min \{L_{22}, \frac{\epsilon}{2\sqrt{2}R}\}$ and choose $K$ and $T$ large enough such that, 
\[
T \geq N_T(\epsilon) \triangleq  \frac{8L_{g_{\lambda}} D}{\epsilon^2}, \quad
\]
and 
\[
 K \geq N_K(\epsilon) \triangleq  2 \sqrt{8\kappa}\Bigg(C + 2 \log \;\left(\frac{2}{\epsilon}\right) + \frac{1}{2} \log \;\left(\frac{2\Delta}{\lambda}\right)\Bigg),
\]
% 	\begin{align*}
% 	& \lambda = \min \{L_{22}, \frac{\epsilon}{2\sqrt{2}R}\},\;\eta_1 = \dfrac{1}{L_{22} + \lambda},\;\eta_2 = \frac{1}{L_{g_{\lambda}}},\; N = \sqrt{\frac{8}{\eta_1 \lambda}}-1,\;T \geq N_T(\epsilon) \triangleq  \frac{8L_{g_{\lambda}} D}{\epsilon^2}, \quad  {\rm and} 
% 	\\ &	
%  K \geq N_K(\epsilon) \triangleq  2 \sqrt{8\kappa}\Bigg(C + 2 \log \;\left(\frac{\sqrt{2}}{\epsilon}\right) + \frac{1}{2} \log \;\left(\frac{2\Delta}{\lambda}\right)\Bigg),
% 	\end{align*}
 where $C = \max \Big\{2\log 2 + \log\;(L_{g_{\lambda}}L_{12} R), \log \left(L_{22} + \lambda\right) + \log \;\left( 2\left(L_{22} + \lambda\right) R + g_{\max}^{\lambda} + L_p + R\right) \Big\}$, $\kappa = \frac{L_{22} + \lambda}{\lambda}$ and $g_{\max}^{\lambda} = \max\limits_{\balpha \in \cSa }\|\nabla_{\balpha} h(\btheta_t,\balpha)\| + \lambda R$, there exists $t \in \{0, \ldots, T\}$ such that $(\btheta_t, \balpha_{t+1})$ is an $\epsilon$-FNE of the original problem~\eqref{eq: game1-cons}. % for any given scalar $\epsilon \in (0,1)$. 
}
\newline
\noindent {\it Proof.}
We only need to show that when the regularized function converges to  $\epsilon$-FNE, by proper choice of $\lambda$, the converged point is also an $\epsilon$-FNE of the original game. 

It is important to notice that in the regularized function the smooth term is $h_{\lambda}(\btheta,\balpha) = h(\btheta,\balpha) - \frac{\lambda}{2} \|\balpha - \hat{\balpha}\|^2$. As a result, from Assumption~\ref{assumption: LipSmooth-uncons} we have 
\[
\|\nabla_{\balpha} h_{\lambda}(\btheta,\balpha_1)-\nabla_{\balpha} h_{\lambda}(\btheta,\balpha_{2})\| = \| \nabla_{\balpha} h(\btheta,\balpha_1)-\nabla_{\balpha} h(\btheta,\balpha_{2})  - \lambda (\balpha_1 - \balpha_2)\|\leq (L_{22}+ \lambda)\|\balpha_1-\balpha_{2}\|,
\]
where the last inequality is obtained by combing triangular inequality and Lipshitz smoothness of the function $h(.,.)$. 
Additionally, $\nabla_{\btheta} h_{\lambda}(\bar{\btheta}, \bar{\balpha}) = \nabla_{\btheta} h(\bar{\btheta}, \bar{\balpha})$.

Now, based on Definition~\ref{def:cons-approx-stationarity}, a point $(\bar{\btheta}, \bar{\balpha})$ is said to be $\epsilon$--FNE of the regularized function if $\cX_{\lambda}(\bar{\btheta}, \bar{\balpha}) \leq \epsilon^2  $ and $\cY_{\lambda}(\bar{\btheta}, \bar{\balpha})\leq \epsilon^2  $ where 
\begin{align*}
	 \cX_{\lambda}(\bar{\btheta}, \bar{\balpha}) \triangleq - 2 L_{11}\min_{ \btheta  \in \cSt}\,\,\Big[  \langle \nabla_{\btheta} h(\bar{\btheta}, \bar{\balpha}), \btheta -\bar{\btheta} \rangle + q(\btheta) - q(\bar{\btheta})  +  \frac{L_{11}}{2} ||\btheta - \bar{\btheta}||^2  \Big],    
\end{align*}
and
\begin{align*}
\cY_{\lambda}(\bar{\btheta}, \bar{\balpha}) \triangleq 2 (L_{22} + \lambda)\max_{\balpha \in \cSa}\,\,  \Big[\langle \nabla_{\balpha} h(\bar{\btheta} , \bar{\balpha})- \lambda (\bar{\balpha} - \hat{\balpha}), \balpha -\bar{\balpha} \rangle  - p(\balpha) + p(\bar{\balpha}) - \frac{(L_{22} + \lambda)}{2} ||\balpha - \bar{\balpha}||^2 \Big].
\end{align*}

For simplicity, let $\cX_{0}(\cdot,\cdot)$ and $\cY_{0}(\cdot,\cdot)$ represent the above definitions for the original function.
In the following we show that by proper choice of $\lambda$ the proposed algorithm will result in a point that $\cX_{0}(\cdot,\cdot) \leq \epsilon^2$ and $\cY_{0}(\cdot,\cdot) \leq \epsilon^2$.
To show this, we first bound the $\cY_0(\cdot,\cdot)$ by  $ \cY_{\lambda}(\cdot,\cdot)$:
\begin{align*}
&\cY_0(\bt) \\ 
= & 2L_{22} \max_{\balpha \in \cSa}\,\, \Big[	\langle \hla, \balpha -\balpha_{t+1} \rangle  - p(\balpha) + p(\balpha_{t+1}) - \frac{L_{22}}{2} ||\balpha - \balpha_{t+1}||^2 \Big]
\\ \stackrel{\circled{1}}{\leq} & 2(2L_{22} + \lambda) \max_{\balpha \in \cSa}\,\, \Big[	\langle \hla, \balpha -\balpha_{t+1} \rangle  - p(\balpha) + p(\balpha_{t+1}) - \frac{(2L_{22} + \lambda)}{2} ||\balpha - \balpha_{t+1}||^2 \Big]
\\  = & 2(2L_{22} + \lambda) \max_{\balpha \in \cSa}\,\, \Big[	\langle \hla - \lambda (\balpha_{t+1} - \hat{\balpha}) + \lambda (\balpha_{t+1} - \hat{\balpha})  , \balpha -\balpha_{t+1}\rangle  - p(\balpha) + p(\balpha_{t+1})  
\\ & \quad -\frac{2L_{22} + \lambda}{2} ||\balpha - \balpha_{t+1}||^2 \Big],
% \\ & = 4 \max_{\balpha \in \cSa}\,\, 	\langle \hlst - \lambda (\balpha^*(\btheta_{t}) - \hat{\balpha}) + \lambda (\balpha^*(\btheta_{t}) - \hat{\balpha}) , \balpha -\balpha^*(\btheta_{t}) \rangle  - p(\balpha) + p(\balpha^*(\btheta_{t})) - \frac{2}{2} \|\balpha - \balpha^*(\btheta_t)\|^2 
% \\ & \leq   4 \Big[ \max_{\balpha \in \cSa}\,\, 	\langle \hlst - \lambda (\balpha^*(\btheta_{t}) - \hat{\balpha}) , \balpha -\balpha^*(\btheta_{t}) \rangle  - p(\balpha) + p(\balpha^*(\btheta_{t})) - \frac{1}{2} \|\balpha - \balpha^*(\btheta_t)\|^2  
% \\  &  \quad +  \max_{\balpha \in \cSa}\,\, \langle \lambda (\balpha^*(\btheta_t) - \hat{\balpha}, \balpha - \balpha^*(\btheta_t) )\rangle - \frac{1}{2} \|\balpha - \balpha^*(\btheta_t)\|^2 \Big] 
% \\ & \leq 4 \left(\frac{1}{2}\cY_{\lambda}(\bts) + \frac{\lambda^2}{2} \|\balpha^*(\btheta_t) - \hat{\balpha}\| \right) 
\end{align*}
where $\circled{1}$ is based on \cite[Lemma 1]{karimi2016linear}. Hence, 
\begin{align*}
&\cY_0(\bt) \\
\leq & 2\frac{2L_{22} + \lambda}{L_{22} + \lambda}(L_{22} + \lambda) \max_{\balpha \in \cSa}\,\, \Big[	\langle \hla - \lambda (\balpha_{t+1} - \hat{\balpha}) + \lambda (\balpha_{t+1} - \hat{\balpha})  , \balpha -\balpha_{t+1}\rangle  - p(\balpha)
\\ & \quad + p(\balpha_{t+1}) -\frac{2L_{22} + \lambda}{2} ||\balpha - \balpha_{t+1}||^2 \Big]
\\  \leq & 4(L_{22} + \lambda) \max_{\balpha \in \cSa}\,\, \Big[	\langle \hla - \lambda (\balpha_{t+1} - \hat{\balpha}) + \lambda (\balpha_{t+1} - \hat{\balpha})  , \balpha -\balpha_{t+1}\rangle  - p(\balpha) + p(\balpha_{t+1}) 
\\ & \quad -\frac{2L_{22} + \lambda}{2} ||\balpha - \balpha_{t+1}||^2 \Big]
\\  = & 4(L_{22} + \lambda) \max_{\balpha \in \cSa}\,\, \Big[	\langle \hla - \lambda (\balpha_{t+1} - \hat{\balpha}), \balpha -\balpha_{t+1}\rangle  - p(\balpha) + p(\balpha_{t+1})
\\ & \quad -\frac{L_{22} + \lambda}{2} ||\balpha - \balpha_{t+1}||^2  -\frac{L_{22}}{2} ||\balpha - \balpha_{t+1}||^2 + \langle \lambda (\balpha_{t+1} - \hat{\balpha})  , \balpha -\balpha_{t+1}\rangle\Big]
\\  \leq & 4(L_{22} + \lambda) \max_{\balpha \in \cSa}\,\, \Big[	\langle \hla -  \lambda (\balpha_{t+1} - \hat{\balpha}), \balpha -\balpha_{t+1}\rangle  - p(\balpha) + p(\balpha_{t+1})
\\ &  \quad -\frac{L_{22} + \lambda}{2} ||\balpha - \balpha_{t+1}||^2\Big] +  4(L_{22} + \lambda) \max_{\balpha \in \cSa} \left[-\frac{L_{22}}{2} ||\balpha - \balpha_{t+1}||^2 + \langle \lambda (\balpha_{t+1} - \hat{\balpha})  , \balpha -\balpha_{t+1}\rangle\right]
\\  \leq &  2\cY_{\lambda}(\bt) + 2 \frac{L_{22} + \lambda}{L_{22}}\lambda^2 R^2,
\end{align*}

where $\circled{1}$ is based on \cite[Lemma 1]{karimi2016linear} and the last inequality follows the definition and optimizing the quadratic term. 
As a result, by choosing $\lambda \leq \min\{{L_{22}, \frac{\epsilon}{2\sqrt{2}R}}\}\triangleq \mathcal{O}(\epsilon)$ we have, 

\begin{align*}
\cY_0(\bt)   \leq  2\cY_{\lambda}(\bt) + 2 \frac{L_{22} + \lambda}{L_{22}}\lambda^2 R^2
\leq \frac{\epsilon^2}{2} + \frac{\epsilon^2}{2} = \epsilon^2,
\end{align*}
where the last inequality comes from the fact that by running Algorithm~\ref{alg: alg_grad} with the given inputs, the regularized function has resulted in a $\frac{\epsilon}{2}$--FNE point.
Now, since $\cX(\btheta_{t},\balpha_{t+1}) $ is same for both original and regularized function, by picking $T \geq N_T(\epsilon) \triangleq  \frac{4L_{g_{\lambda}} D}{\epsilon^2} =\frac{4D}{\epsilon^2}\left(L_{11} + \frac{L_{12}^2}{\lambda}\right) \triangleq \mathcal{O}(\epsilon^{-3})$ , we conclude  $\cX_{0}(\btheta_{t},\balpha_{t+1})  \leq \epsilon^2$. This completes the proof.

\end{document}